\newtheorem{theorem}{Theorem}
\newtheorem{prop}{Proposition}
\newtheorem{corollary}{Corollary}
\newtheorem{remark}{Remark}
\newcommand{\W}{\mathcal{W}}
\newcommand{\Hilb}{\mathcal{H}}
\newcommand{\E}{\mathbb{E}}
\newcommand{\argmin}{\operatornamewithlimits{argmin}}
\newcommand{\argmax}{\operatornamewithlimits{argmax}}
\newcommand{\diag}{\operatorname{diag}}
\newcommand{\Supp}{\operatorname{Supp}}
\newcommand{\Prob}{\operatorname{Pr}}
\newcommand{\Mat}{\operatorname{Mat}}
\newcommand{\new}{}
\begin{document}

\title{Stochastic Saddle-Point Optimization for the Wasserstein Barycenter Problem
\thanks{
The work of A. Gasnikov in Section \ref{sec:saddle_point_repr} was partially supported by the Ministry of Science and Higher Education of the Russian Federation (Goszadaniye) 075-00337-20-03, project no. 0714-2020-0005.
The work of P. Dvurechensly in Section \ref{sec:algorithms} was funded by Russian Science Foundation (project 18-71-10108).
The work of D. Tiapkin was prepared within the framework of the HSE University Basic Research Program.
}}


\author{Daniil Tiapkin \\
 HSE University, Moscow, Russia \\
              \texttt{dtyapkin@hse.ru}  
\And
        Alexander Gasnikov\\ 
        Moscow Institute of Physics and Technology, Moscow, Russia \\
              Institute for Information Transmission Problems, Moscow, Russia \\
              HSE University, Moscow, Russia\\
             Weierstrass Institute for Applied Analysis and Stochastics, Berlin, Germany\\
              \texttt{gasnikov@yandex.ru}
              \And 
        Pavel Dvurechensky\\
                Weierstrass Institute for Applied Analysis and Stochastics, Berlin, Germany \\
                Institute for Information Transmission Problems, Moscow, Russia \\
                \texttt{pavel.dvurechensky@wias-berlin.de}
}

\maketitle

\begin{abstract}
We consider the population Wasserstein barycenter problem for random probability measures supported on a finite set of points and generated by an online stream of data. This leads to a complicated stochastic optimization problem where the objective is given as an expectation of a function given as a solution to a random optimization problem. We employ the structure of the problem and obtain a convex-concave stochastic saddle-point reformulation of this problem. 
In the setting when the distribution of random probability measures is discrete, we propose a stochastic optimization algorithm and estimate its complexity.  
The second result, based on kernel methods, extends the previous one to the arbitrary distribution of random probability measures. Moreover, this new algorithm has a total complexity better than the Stochastic Approximation approach combined with the Sinkhorn algorithm in many cases. We also illustrate our developments by a series of numerical experiments.
\keywords{Stochastic Optimization \and Saddle-Point Optimization \and Computational Optimal Transport \and Wasserstein Barycenter Problem}
\end{abstract}
\label{intro}
In this paper we consider stochastic optimization problems, which arise in computational optimal transport when the goal is to estimate population Wasserstein barycenter \cite{agueh2011barycenters} (or Fr\'echet mean w.r.t. Wasserstein distance) of a probability distribution on the Wasserstein space of probability measures. Wasserstein barycenter problem has recently attracted a lot of attention from the machine learning, statistics, and optimization community, see \cite{peyre2018optimaltransport,dvinskikh2020stochastic} and references therein. From the computational point of view, approximating population Wasserstein barycenter is a challenging optimization problem since it contains several layers of complications. The first layer requires to define for $p\geq 1$ the $p$-Wasserstein distance $\W_p(r,c)$ between two probability measures $r,c$, which in this paper we assume to be discrete probability distributions that belong to the standard simplex $\Delta^n$. $\W_p(r,c)$ is defined as an optimal value in an optimal transport problem, where the goal is to find a transport plan between vectors $r$,$c$ such that the total transportation cost is minimal (see \eqref{eq:ot_problem} for a formal definition). The next layer is that the measure $c$ is assumed to be random with some probability distribution $P_c$ on $\Delta^n$ and the population barycenter $r^*$ minimizes the expected Wasserstein distance, i.e. $r^*=\argmin_{r \in \Delta^n} \E_c \W_p^p(r, c)$.
This is a stochastic optimization problem, where the objective is defined as a solution to some other random optimization problem. The third layer of complication is that the dimension $n$ can be very large, e.g. $10^6$. To motivate this setting we refer to \cite{boissard2015distribution}, where the authors consider a template estimation problem from a random sample of its transformations. A particular example of such a template can be images, considered in \cite{cuturi2014fast}, where empirically the Wasserstein barycenter gives the best quality of the template image reconstruction. For a $10^3$ by $10^3$ image, we obtain that $n=10^6$ and the dimension of the transportation plan is then $n^2=10^{12}$ leading to a huge-scale optimization problem.

Similar to other stochastic optimization problems, see e.g. \cite{nemirovski2009robust}, there are two approaches to the population Wasserstein barycenter problem, which we refer to as offline (also known as Sample Average Approximation, SAA) and online (also known as Stochastic Approximation, SA). The offline approach assumes that a number of random probability measures $c_i$, $i=1,...,m$ is sampled in advance and all the measures are stored in memory. Then the population Wasserstein barycenter problem is approximated by the empirical Wasserstein barycenter problem $\hat{r}=\argmin_{r \in \Delta^n} \frac{1}{m} \sum_{i=1}^m\W_p^p(r, c_i)$, i.e. the expectation is approximated by the finite-sum. Finally, it is assumed that an optimization algorithm has access to all of the measures $c_i$, $i=1,...,m$ immediately, probably in parallel, and the goal is to minimize this finite-sum. 
So far, nearly all algorithms in the literature for approximating Wasserstein barycenter are designed for this finite-sum problem, to name a few  \cite{cuturi2014fast,benamou2015iterative,staib2017parallel,claici2018stochastic,dvurechensky2018decentralize,uribe2018distributed,kroshnin2019complexity,lin2020revisiting,dvinskikh2020improved,heinemann2020randomised,krawtschenko2020distributed}. Additionally, in this setting, it is possible to use a big arsenal of modern decentralized distributed optimization algorithms, see for example, \cite{dvurechensky2018decentralize,uribe2018distributed,scaman2017optimal,dvinskikh2019primal,krawtschenko2020distributed,gorbunov2019optimal,hendrikx2020statistically,hendrikx2020optimal,gorbunov2020recent,rogozin2021accelerated,rogozin2021decentralized,dvurechensky2021hyperfast,beznosikov2021decentralized}, where, in particular, such algorithms are developed with best-known complexity bounds for general finite-sum optimization with cheap dual stochastic oracle, which turns out to be available for the Wasserstein distance.

Despite all these developments, the offline approach does not consider clearly the question of how well the solution to the finite-sum problem approximates the target population Wasserstein barycenter which is a solution to a stochastic optimization problem. Moreover, in practice it may not be possible to generate all the measures in advance, the storage could be problematic, and the data consisting of random measures can appear as an online stream.
Thus, in this paper, we focus on the less studied online setting with all measures unavailable in advance.
Online estimation techniques have been very recently developed in \cite{mensch2020online} for Wasserstein distance.
Online setting for population Wasserstein barycenter has started recently to attract interest in the statistical community \cite{kroshnin2019statistical,chewi2020gradient} where population Wasserstein barycenters are studied in a different from ours setting of Bures-Wasserstein space of Gaussian probability measures, and in optimization community \cite{dvinskikh2020stochastic}, where online approach is compared to offline. 

The approach of the latter paper is based on a widespread idea of entropy regularized optimal transport \cite{cuturi2013sinkhorn,peyre2018optimaltransport}. One of the main negative sides of the entropic regularization is that if the goal is to find an $\varepsilon$-approximation to the original non-regularized Wasserstein distance or barycenter, the regularization parameter has to be chosen proportional to $\varepsilon$ \cite{dvurechensky2018computational,lin2019efficient,kroshnin2019complexity,lin2020revisiting,dvurechensky2020stable}. This leads to numerical instability \cite{cuturi2013sinkhorn,stonyakin2019gradient} in the Sinkhorn's algorithm for Wasserstein distance and Sinkhorn-type algorithms for Wasserstein barycenters \cite{benamou2015iterative}.
Alternative methods, e.g. based on accelerated gradient method \cite{chernov2016fast,dvurechensky2016primal-dual,dvurechensky2018computational,anikin2017dual,guminov2019accelerated,lin2019efficient,lin2020revisiting,nesterov2020primal-dual,dvurechensky2020stable,guminov2021combination}, similarly to Sinkhorn's algorithm require the regularization parameter to be small. 
Moreover, entropic regularization leads to a blurred reconstruction of the original template \cite{cuturi2014fast}.

Thus, in this paper, we focus on the following question and give a positive answer to it.
{\textit{Is it possible to find a population Wasserstein barycenter of a set of discrete probability measures in the online setting and without the use of entropic regularization and the calculation of (regularized) Wasserstein distance or its (sub)gradients?}} 


To propose such an online approach that does not require calculating Wasserstein distance and its subgradient, we use a saddle-point representation for the Wasserstein barycenter problem. Since the population Wasserstein barycenter problem is a convex stochastic programming problem, we obtain partially stochastic convex-concave saddle-point representation. Then we adapt Stochastic Mirror Descent \cite{nemirovsky1983problem,nemirovski2009robust,bayandina2018mirror} for this, partially infinite-dimensional, saddle-point problem. Finally, by using a reproducing kernel Hilbert space trick inspired by \cite{genevay2016stochastic}, we demonstrate how to make our approach practical and obtain finite complexity bounds. 
We compare the proposed methods with the online approach of \cite{dvinskikh2020stochastic} and obtain a regime in which our complexity bounds are better. We also illustrate numerically the regime in which our methods have better performance than the online algorithm in \cite{dvinskikh2020stochastic}.

\paragraph{Notation.} 
We define \(\Mat_{n \times m}(X)\) as a space of all matrices of size \(n \times m\) with entries from the set \(X\).
We denote by \([n]\) an \(n\)-element set \(\{1, 2, \ldots, n\}\), and \(e^v, e^A, \log(v), \log(A), \sqrt{v}, \sqrt{A}\) for \(v \in \mathbb{R}^n, A \in \Mat_{n \times n}(\mathbb{R})\) as the element-wise exponent, logarithm and square root respectively.
Also we define \(\Delta^n\) to be \(n\)-dimensional probability simplex \(\Delta^n  = \{ (s_1, \ldots, s_n) \mid \forall i \in [n] : s_i \geq 0, \ \ \sum_{i = 1}^n s_i = 1\}\).
\(\mathbf{1}_n\) is \(n\)-dimensional vector consisting of ones. If the dimension is clear from the context, the subscript is omitted. As \(e_i\) we denote the \(i\)-th coordinate vector. The probability measure induced by a random variable \(\xi\) will be denoted by \(P_\xi\). 
For a matrix \(M \in \Mat_{n \times m}(X)\) we denote by \(M_{(i)}\) the \(i\)-th row of this matrix.

\section{Preliminaries}
\label{sec:background}

\subsection{Background on optimal transport}
\label{sec:ot_background}
In this section, following \cite{peyre2018optimaltransport}, we recall some basic definitions related to optimal transport. Since we deal only with discrete measures, we consider only the discrete-discrete optimal transport problem. 

\paragraph{Optimal transport (OT) problem.}
    For a fixed non-negative matrix \(C\) and two discrete probability measures \(r,c \in \Delta^n\)  with \(n\)-element support define \textit{a transportation cost} between measures \(r\) and \(c\) associated with the cost matrix \(C\) as a solution to the following optimization problem
    \begin{equation}\label{eq:ot_problem}
        L_{C}(r,c) = \min_{X \in \mathcal{U}(r,c)} \langle C, X \rangle,
    \end{equation}
    where \(X\) is called \textit{a transport plan} and \(\mathcal{U}(r,c)\) is \textit{a transport polytope}, defined as
    \(
        \mathcal{U}(r,c) = \{ X \in \Mat_{n \times n}(\mathbb{R}_+) \mid X \mathbf{1} = r, \ X^T \mathbf{1} = c \}
    \),
    and $\langle \cdot, \cdot \rangle$ is the Frobenius dot-product of two matrices.

If \(r\) and \(c\) are probability measures onto discrete \(n\)-element metric space \((\mathcal{M}, d)\), the \textit{\(p\)-Wasserstein distance} between these two measures is defined as the \(p\)-th root of the transportation cost associated with the matrix \(D^p_{i,j} = d(x_i, x_j)^{p}\), where \(x_i\) and \(x_j\) are elements of \(\mathcal{M}\). Formally,
\(
    \W_p(r,c) = \left( L_{D^p}(r,c) \right)^{1/p}.
\)
With a slight abuse of notation, we will refer to \(L_C(r,c)\) for an arbitrary non-negative cost matrix \(C\) as a Wasserstein distance too, and denote it by \(\W(r,c)\) when matrix \(C\) is fixed.

\paragraph{Dual problem.} The linear program in the definition of the transportation cost can be reformulated using so-called Kantorovich duality in two ways: using some reformulation of results from \cite{peyre2018optimaltransport} gives:
\begin{equation}\label{eq:dual_simple}
    L_{C}(r,c) = \max_{\substack{\lambda, \mu \in \mathbb{R}^n\\-C_{i,j}-\lambda_i-\mu_j \leq 0}} - \langle \lambda, r \rangle - \langle \mu, c \rangle,
\end{equation}
and an equivalent formulation is
\begin{equation}\label{eq:dual_problem}
    L_{C}(r,c) = \max_{\mu \in \mathbb{R}^n} - \langle \lambda^*(\mu, C), r \rangle - \langle \mu, c \rangle,
\end{equation}
where \(\lambda^* \colon \mathbb{R}^n \times \Mat_{n \times n}(\mathbb{R}) \to \mathbb{R}^n\) is defined element-wise: 
\begin{equation}\label{eq:lambda_def}
    \lambda_i^*(\mu, C) = \max_{j \in [n]} ( -C_{i,j} - \mu_j).
\end{equation}

\paragraph{Barycenter definition.}
Suppose that we have a random variable \(\xi \colon \Omega_\xi \to \Delta^n\) on probability simplex, or, equivalently, on the space of probability measures onto \(\mathcal{M}\). Then we can define a \(p\)-Wasserstein (population) barycenter w.r.t. \(\xi\) as the solution to the following optimization problem:
\begin{equation}\label{eq:barycenter_canonical}
    r_* = \argmin_{r \in \Delta^n} \E \left[ \W_p^p(r, \xi) \right]. 
\end{equation}
However, we are interested in a more general situation defined by using an arbitrary non-negative cost matrix \(C\):
\[
    r_* = \argmin_{r \in \Delta^n} \E\left[L_C(r, \xi)\right] = \argmin_{r \in \Delta^n} \E\left[\W(r, \xi)\right].
\]

\subsection{Notation for saddle-point problems}
\label{not_saddle_point}

In this section, we give a necessary background on saddle-point problems. For a more comprehensive description we refer to \cite{bubeck2014convex,nemirovski2009robust}.
The following convex-concave saddle point problem is of relevance to us:
\[
    \min_{x \in \mathcal{X}} \max_{y \in \mathcal{Y}} F(x,y).
\]
The typical way to evaluate the quality of the algorithm that outputs the pair \((\widetilde x,\widetilde y)\) is to use the so-called duality gap:
\[
    \max_{y \in \mathcal{Y}} F(\widetilde x, y) - \min_{x \in \mathcal{X}} F(x, \widetilde{y}) \leq \varepsilon.
\]

For the stochastic setting, where \((\widetilde x, \widetilde y)\) are random, we are mostly interested in an approximate solution with large probability:
\begin{equation}\label{eq:confidence_region_criteria}
    \Prob\left[\max_{y \in \mathcal{Y}} F(\widetilde x, y) - \min_{x \in \mathcal{X}} F(x, \widetilde{y}) \geq \varepsilon\right] \leq \sigma.
\end{equation}
We refer to $\varepsilon$ as accuracy or precision and to $\sigma$ as confidence level.
Another useful and well-known criteria is small expectation of the duality gap:
\begin{equation}\label{eq:expectation_criteria}
    \E\left[ \max_{y \in \mathcal{Y}} F(\widetilde x, y) - \min_{x \in \mathcal{X}} F(x, \widetilde{y}) \right] \leq \varepsilon.
\end{equation}

\section{Saddle-point representation}
\label{sec:saddle_point_repr}
Using the dual reformulation of the optimal transport problem and the definition of the Wasserstein barycenter, we obtain the following problem:
\[
    \min_{r \in \Delta^n} \E\left[ \W(r,\xi) \right] = \min_{r \in \Delta^n} \E \left[ \max_{\mu \in \mathbb{R}^n} - \langle \lambda^*(\mu, C), r \rangle - \langle \mu, \xi \rangle \right].
\]

For this problem, we apply Theorem 14.60 from \cite{rockafellar2009va} to the space of all \(P_\xi\)-measurable functions \(\mathcal{F}\). Clearly, it is decomposable. It is also clear that the function under maximum is a normal integrand and finite, since the barycenter is well-defined. Thus, we have the next equality:
\begin{equation}\label{eq:barycenter_basic}
    \min_{r \in \Delta^n} \E\left[ \mathcal{W}(r, \xi) \right] = \min_{r \in \Delta^n} \sup_{f_\mu \in \mathcal{F}} \E\left[-\langle \lambda^*(f_\mu(\xi), C), r \rangle - \langle f_\mu(\xi), \xi \rangle \right].
\end{equation}
Here we call \(f_\mu \colon \Delta^n \to R^n\) the function that assigns a value of Kantorovich potential \(\mu\) for every point in \(\Delta^n\). In other words, we are trying to find potentials simultaneously for every possible measure. We will refer to such functions as potential functions. The Wasserstein barycenter is a solution to this stochastic saddle-point problem. 

\subsection{Bounds for the dual variables}
\label{sec:dual_var_bound}

To derive theoretical guarantees of the proposed algorithms, we need to construct a bound on the optimal variable value in the dual problem \eqref{eq:dual_problem}. We obtain it by using properties of the entropy-regularized optimal transport problem and some additional assumptions on the barycenter \(r_*\).
The core idea is to use a known bound from \cite{dvurechensky2018computational,lin2019efficient} on the optimal variable in the regularized case and transfer it into the non-regularized case by going to the limit when the regularization parameter goes to zero.

\paragraph{Entropic regularization.} Firstly, let us define the entropy regularized OT problem:
\[
    L_{C}^\gamma(r,c) = \min_{X \in \mathcal{U}(r,c)} \langle C, X \rangle - \gamma H(X),
\]
where \(H(X) = - \sum_{i,j=1}^n X_{i,j} \log X_{i,j}\) is the entropic regularizer. One can rewrite the problem in the equivalent form using Lagrange multipliers $\lambda,\mu$:
\begin{equation}\label{eq:dual_simple_regularized}
    L_{C}^\gamma(r,c) = \max_{\lambda, \mu \in \mathbb{R}^n}  -\langle \lambda, r \rangle - \langle \mu, c \rangle - \gamma \sum_{i,j=1}^n \exp\left(\frac{-C_{i,j} - \lambda_i - \mu_j}{\gamma}  - 1\right).
\end{equation}
Denote by \(\lambda^*_\gamma, \mu^*_\gamma\) the optimal variables for \eqref{eq:dual_simple_regularized}. Our goal is to connect optimal dual variables of the regularized and non-regularized problems. The needed result is presented in the following proposition.

\begin{prop}
    \((\lambda^*_\gamma, \mu^*_\gamma)\) converges to a pair of optimal dual variables of the non-regularized problem \eqref{eq:dual_simple} as \(\gamma \to 0^+\).
\end{prop}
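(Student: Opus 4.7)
I would argue via a standard variational/epi-convergence argument, passing to a subsequential limit. First, observe that both the regularized objective in \eqref{eq:dual_simple_regularized} and the non-regularized one in \eqref{eq:dual_simple} are invariant under the translation $(\lambda,\mu)\mapsto(\lambda+t\mathbf{1},\mu-t\mathbf{1})$, using $\mathbf{1}^\top r=\mathbf{1}^\top c=1$. To kill this ambiguity I would fix a normalization (say $\mu_n=0$) and work in the quotient. On this quotient the regularized dual is strictly concave (the Hessian of the $\exp$-penalty spans the orthogonal complement of the translation direction), so $(\lambda^*_\gamma,\mu^*_\gamma)$ is uniquely defined.

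Next I would establish pointwise convergence of the regularized dual to the non-regularized one. For a fixed $(\lambda,\mu)$, each penalty term $\gamma\exp\!\bigl((-C_{i,j}-\lambda_i-\mu_j)/\gamma-1\bigr)$ tends to $0$ as $\gamma\to 0^+$ if $-C_{i,j}-\lambda_i-\mu_j\le 0$ (so the regularized objective converges to $-\langle\lambda,r\rangle-\langle\mu,c\rangle$ on the feasible set of \eqref{eq:dual_simple}), and it blows up to $+\infty$ if some inequality is strictly violated (so the regularized dual value tends to $-\infty$). Thus $\Phi_\gamma\to\Phi$ in the sense required to apply a Maximum-theorem-type argument, provided we have compactness of the maximizers.

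The main obstacle is exactly this compactness: one must show that $\{(\lambda^*_\gamma,\mu^*_\gamma)\}_{\gamma\in(0,\gamma_0]}$ stays in a bounded set of the quotient. I would use the explicit bound on dual variables from \cite{dvurechensky2018computational} (which is the reason the authors announced that reference), giving $\|\lambda^*_\gamma\|_\infty,\|\mu^*_\gamma\|_\infty\le R$ for an $R$ independent of $\gamma\in(0,\gamma_0]$ after the normalization is fixed. As a sanity check, boundedness of the optimal values $L_C^\gamma(r,c)$ can be derived on the primal side: plugging any fixed feasible transport plan in the regularized primal gives $L_C^\gamma(r,c)\le L_C(r,c)+\gamma\cdot O(\log n)$, and weak duality gives the matching lower bound, so the dual value cannot escape, which together with strict concavity transversal to the translation direction prevents the maximizer from escaping either.

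Given this uniform bound, I extract a subsequence $\gamma_k\to 0^+$ along which $(\lambda^*_{\gamma_k},\mu^*_{\gamma_k})\to(\bar\lambda,\bar\mu)$. The limit point must be feasible for \eqref{eq:dual_simple}: if some constraint were strictly violated at $(\bar\lambda,\bar\mu)$, continuity would force it to be violated along the tail of the subsequence, making the regularized dual value diverge to $-\infty$, contradicting boundedness. For any feasible test point $(\lambda,\mu)$, optimality of $(\lambda^*_{\gamma_k},\mu^*_{\gamma_k})$ and the pointwise limit give
\[
-\langle\bar\lambda,r\rangle-\langle\bar\mu,c\rangle
=\lim_{k}\Phi_{\gamma_k}(\lambda^*_{\gamma_k},\mu^*_{\gamma_k})
\ge \lim_{k}\Phi_{\gamma_k}(\lambda,\mu)
=-\langle\lambda,r\rangle-\langle\mu,c\rangle,
\]
so $(\bar\lambda,\bar\mu)$ solves \eqref{eq:dual_simple}. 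Since every subsequential limit lies in the optimal set of \eqref{eq:dual_simple}, the full family $(\lambda^*_\gamma,\mu^*_\gamma)$ converges to that set as $\gamma\to 0^+$, which is the claimed statement (and becomes literal convergence whenever the non-regularized dual optimum is unique modulo translation).
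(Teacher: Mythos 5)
Your argument is correct and rests on the same underlying mechanism as the paper's proof, but you carry it out by hand where the paper outsources it. The paper's entire proof is: cite Theorems 7.17 and 7.30 of Rockafellar--Wets to reduce the claim to pointwise convergence of the regularized dual objective to the non-regularized one, then verify the limit $\gamma\exp(\alpha/\gamma-1)\to 0$ for $\alpha\le 0$ and $\to+\infty$ for $\alpha>0$ --- exactly the computation in your second paragraph. What you add, and what the paper's one-line invocation of epi-convergence machinery quietly skips, is the justification that the maximizers actually have a convergent subsequence: because of the translation invariance $(\lambda,\mu)\mapsto(\lambda+t\mathbf{1},\mu-t\mathbf{1})$ the regularized duals are \emph{not} level-bounded, so the cited convergence-of-minimizers theorem does not apply off the shelf; one must first quotient out (or normalize away) the translation direction and then establish a $\gamma$-uniform bound on the normalized maximizers. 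Your use of Lemma 1 of \cite{dvurechensky2018computational} for this purpose is exactly the right tool --- indeed it is the same bound the paper deploys immediately \emph{after} the proposition, in the assumption $r,c>0$, to prove Theorem~\ref{th:dual_bound} --- and after undoing the change of variables it does yield a bound independent of $\gamma\in(0,\gamma_0]$. Two small points: your proposition-level statement should inherit the hypothesis $r,c>0$ (the compactness step needs it); and the equality $-\langle\bar\lambda,r\rangle-\langle\bar\mu,c\rangle=\lim_k\Phi_{\gamma_k}(\lambda^*_{\gamma_k},\mu^*_{\gamma_k})$ is slightly more than you need --- since the penalty is nonpositive you get $\limsup_k\Phi_{\gamma_k}(\lambda^*_{\gamma_k},\mu^*_{\gamma_k})\le-\langle\bar\lambda,r\rangle-\langle\bar\mu,c\rangle$ for free, and that one-sided estimate already closes the sandwich. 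Your closing caveat that the conclusion is really convergence to the optimal \emph{set} (literal convergence only under uniqueness modulo translation) is an honest sharpening of the proposition as stated.
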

\begin{remark}
Optimal dual variables of the non-regularized problem are not necessarily unique, but for our purposes, it is sufficient to work with one of the solutions.
\end{remark}
\begin{proof}
    Using Theorem 7.17 and Proposition 7.30 from \cite{rockafellar2009va}, it is sufficient to prove that the function maximized in \eqref{eq:dual_simple_regularized} converges point-wise to the function maximized in \eqref{eq:dual_simple} if in \eqref{eq:dual_simple} we equivalently change the constraint to its indicator function in the objective.

    Notice that by the properties of the limit it is sufficient to prove that
    \[
        \gamma \exp\left( \frac{\alpha}{\gamma} - 1 \right) \underset{\gamma \to 0^+}{\to} \begin{cases}
            +\infty & \alpha > 0 \\
            0 & \alpha \leq 0
        \end{cases}
    \]
    as a function of \(\alpha\). 
    
    If \(\alpha \leq 0\), the claim clearly holds. If \(\alpha > 0\), then
    \[
        \frac{\alpha}{\gamma} - 1 + \log \gamma = \frac{\alpha + \gamma \log \gamma}{\gamma} - 1 \to +\infty \;\; \text{as} \;\; \gamma \to 0^+.
    \]\qed
\end{proof}

Thus, it is sufficient to provide a $\gamma$-dependent bound  for the norm of \(\lambda^*_\gamma, \mu^*_\gamma\) and take the limit as $\gamma \to 0^{+}$ to obtain a bound for a pair of optimal dual variables in the non-regularized problem.

To obtain the bounds in the regularized case, we, first, make the change of variables: \(u = -\nicefrac{\lambda}{\gamma} - \nicefrac{1}{2}, v = -\nicefrac{\mu}{\gamma} - \nicefrac{1}{2}\). Denote by \(K\) the matrix \(e^{-C/\gamma}\). Then we can rewrite \eqref{eq:dual_simple_regularized} in the following form using the notation of \cite{cuturi2013sinkhorn}:
\begin{equation}\label{eq:dual_regularized}
    L_{C}^\gamma(r,c) = \gamma \max_{v,u} \left[ \langle u, r \rangle + \langle v,c \rangle - \langle \mathbf{1},  \diag{e^u} K \diag{e^v}\rangle  \right] + 2 \gamma.
\end{equation}

\begin{remark}
    If \((u^*, v^*)\) are optimal in \eqref{eq:dual_regularized}, then \((u^* + \alpha \mathbf{1}, v^* - \alpha \mathbf{1})\) are optimal too. Hence, we can assume \(\max_i v^* = \max_{i \in [n]}(-\nicefrac{\mu_\gamma^*}{\gamma} - \nicefrac{1}{2}) = -\nicefrac{1}{2} \iff \min_{i \in [n]} (\mu_\gamma^*)_i = 0\).
\end{remark} 

Now, under assumption \(r > 0, c > 0\), it is possible to use bounds from Lemma 1 of the work \cite{dvurechensky2018computational} :
\begin{gather*}
    \max_i (u^*)_i - \min_i (u^*)_i \leq R, \\
    \max_i (v^*)_i - \min_i (v^*)_i \leq R,
\end{gather*}
where \(R := - \log( \nu \min_{i \in [n]}\{r_i, c_i\} ), \nu = e^{-\nicefrac{\Vert C\Vert_\infty}{\gamma}}\).

Returning to the original variables:
\begin{equation}\label{eq:linfty_norm_bound}
    \max_{i} (\mu_\gamma^*)_i - \min_i (\mu_\gamma^*)_i \leq \Vert C \Vert_\infty - \gamma \log \min_{i\in[n]} \{r_i, c_i\}.
\end{equation}
We know that \(\min_{i \in [n]} (\mu_\gamma^*)_i = 0 \iff \mu_\gamma^* \geq 0\). Hence, \eqref{eq:linfty_norm_bound} can be rewritten as
\[
    \Vert \mu_\gamma^* \Vert_\infty \leq \Vert C \Vert_\infty - \gamma \log \min_{i\in[n]} \{r_i, c_i\}.
\]

By tending \(\gamma\) to \(0^+\) and noting that problem \eqref{eq:dual_problem} is equivalent to problem \eqref{eq:dual_simple}, we prove the following theorem.
\begin{theorem}\label{th:dual_bound}
    Assume that $r,c >0$. Then, there exists an optimal solution \(\mu^*\) for the problem \eqref{eq:dual_problem} such that \( \Vert \mu^* \Vert_\infty \leq \Vert C \Vert_\infty \).
\end{theorem}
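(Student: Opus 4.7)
The plan is to read the theorem as the end-product of the limiting argument that has already been set up in the running text, and to fill in the $\gamma \to 0^+$ passage cleanly. The preceding proposition gives that $(\lambda^*_\gamma, \mu^*_\gamma)$ converges to an optimal pair for the non-regularized dual~(\ref{eq:dual_simple}), which by equivalence with~(\ref{eq:dual_problem}) yields a candidate $\mu^* := \lim_{\gamma \to 0^+} \mu^*_\gamma$. The translation-invariance Remark lets me fix a consistent normalization $\min_i (\mu^*_\gamma)_i = 0$ along the sequence, since the transformation $(\lambda,\mu) \mapsto (\lambda - \alpha\mathbf{1}, \mu + \alpha\mathbf{1})$ leaves the objective unchanged when $r, c \in \Delta^n$. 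With this normalization, the oscillation bound from Lemma~1 of~\cite{dvurechensky2018computational} becomes the pointwise bound
\[
    \|\mu^*_\gamma\|_\infty \leq \|C\|_\infty - \gamma \log \min_{i \in [n]} \{r_i, c_i\},
\]
valid under the positivity assumption $r, c > 0$ needed by that lemma.

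Next I would pass to the limit on both sides. On the right, $\gamma \log \min_i \{r_i, c_i\} \to 0$ as $\gamma \to 0^+$ because $\min_i \{r_i, c_i\}$ is a strictly positive constant independent of $\gamma$. On the left, the proposition provides $\mu^*_\gamma \to \mu^*$, and since the $\ell_\infty$ norm is continuous, $\|\mu^*_\gamma\|_\infty \to \|\mu^*\|_\infty$. Combining yields $\|\mu^*\|_\infty \leq \|C\|_\infty$, which is the theorem with the normalized optimizer as the witness.

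To remove the temporary positivity assumption on $r$ and $c$, I would perturb: set $r_\delta = (1-\delta) r + \delta \mathbf{1}/n$ and $c_\delta$ analogously, apply the preceding argument to obtain an optimal dual $\mu^*_\delta$ for~(\ref{eq:dual_problem}) with $\|\mu^*_\delta\|_\infty \leq \|C\|_\infty$, and then extract a convergent subsequence as $\delta \to 0^+$ using compactness of the $\|C\|_\infty$-ball. Joint continuity of the dual objective in $(r, c, \mu)$ together with continuity of $L_C(r,c)$ in $(r,c)$ identifies the limit as an optimal dual point for the original $(r,c)$. The main technical obstacle is exactly this last continuity/closedness check, since the $\mathrm{argmax}$ correspondence in~(\ref{eq:dual_problem}) is only upper-semicontinuous in general; however, restriction to the compact ball $\{\mu : \|\mu\|_\infty \leq \|C\|_\infty\}$ turns this into a routine verification via standard linear programming stability, so the bound survives the limit.
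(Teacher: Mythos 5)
Your argument is essentially the paper's own proof: fix the normalization $\min_i(\mu^*_\gamma)_i=0$ via translation invariance, apply the oscillation bound from Lemma 1 of \cite{dvurechensky2018computational} under $r,c>0$ to get $\Vert \mu^*_\gamma\Vert_\infty \leq \Vert C\Vert_\infty - \gamma\log\min_i\{r_i,c_i\}$, and let $\gamma\to 0^+$ using the epi-convergence proposition. Your closing perturbation step to drop the positivity of $r,c$ goes beyond what the paper does --- the paper simply carries that assumption forward (Corollary \ref{cor:variable_bound} explicitly assumes $r^*>0$) --- so it is a harmless strengthening rather than a needed repair; the only care point, shared with the paper, is that one should really extract a convergent subsequence of the uniformly bounded family $\{\mu^*_\gamma\}$ rather than assert convergence of the whole net, since the limiting dual problem may have non-unique optimizers.
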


Additionally, there in an immediate corollary of Theorem \ref{th:dual_bound}.
\begin{corollary}\label{cor:variable_bound}
    Assume that \((r^*, f_\mu^*)\) are optimal variables in the saddle-point problem \eqref{eq:barycenter_basic} in which all $\xi >0$. Then, if \(r^* > 0\), there exists another optimal \(\hat{f_\mu}\) such that  \(\Vert \hat{f_\mu} \Vert_\infty \leq \Vert C \Vert_\infty\) \(P_\xi\)-almost surely.
\end{corollary}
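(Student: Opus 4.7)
The plan is to invoke Theorem~\ref{th:dual_bound} pointwise in the randomness and then use a measurable selection to realize the bounded optimal dual as an element of $\mathcal{F}$. For a fixed realization of $\xi$, the inner maximization inside the expectation in (\ref{eq:barycenter_basic}) coincides, by construction, with the dual problem (\ref{eq:dual_problem}) for the pair $(r^*, \xi)$. Since $r^* > 0$, Theorem~\ref{th:dual_bound} guarantees, for $P_\xi$-almost every $\xi$, the existence of a minimizer $\mu^\star(\xi)$ of that inner problem satisfying $\Vert \mu^\star(\xi) \Vert_\infty \leq \Vert C \Vert_\infty$.

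To convert this pointwise existence into a single $P_\xi$-measurable function, I would introduce the set-valued map
$$M(\xi) := \Bigl\{\mu \in \mathbb{R}^n \,:\, -\langle \lambda^*(\mu, C), r^* \rangle - \langle \mu, \xi \rangle = \W(r^*, \xi),\ \Vert \mu \Vert_\infty \leq \Vert C \Vert_\infty \Bigr\}.$$
By Theorem~\ref{th:dual_bound}, $M(\xi) \neq \emptyset$ almost surely. Each $M(\xi)$ is compact, as the intersection of the closed sub-level set of a continuous function with a compact $\ell_\infty$-ball. Joint continuity of $(\mu, \xi) \mapsto -\langle \lambda^*(\mu, C), r^* \rangle - \langle \mu, \xi \rangle$, together with continuity of $\xi \mapsto \W(r^*, \xi)$ on $\Delta^n$, shows that the graph of $M$ is closed in $\Delta^n \times \mathbb{R}^n$, hence Borel-measurable. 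The Kuratowski--Ryll-Nardzewski measurable selection theorem (or Theorem~14.37 in \cite{rockafellar2009va}) then produces a $P_\xi$-measurable $\hat{f_\mu} : \Omega_\xi \to \mathbb{R}^n$ with $\hat{f_\mu}(\xi) \in M(\xi)$ almost surely; in particular $\hat{f_\mu} \in \mathcal{F}$ and $\Vert \hat{f_\mu}(\xi) \Vert_\infty \leq \Vert C \Vert_\infty$ $P_\xi$-almost surely.

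It remains to verify optimality of $(r^*, \hat{f_\mu})$. Since $\hat{f_\mu}(\xi)$ attains the inner supremum for $P_\xi$-a.e. $\xi$, the integrand-interchange used to derive (\ref{eq:barycenter_basic}) yields
$$\E\bigl[-\langle \lambda^*(\hat{f_\mu}(\xi), C), r^* \rangle - \langle \hat{f_\mu}(\xi), \xi \rangle\bigr] \;=\; \E\bigl[\W(r^*, \xi)\bigr],$$
which is the optimal value of (\ref{eq:barycenter_basic}) by optimality of $r^*$. Hence $(r^*, \hat{f_\mu})$ is an optimal saddle point with the claimed $\ell_\infty$ bound.

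I expect the main obstacle to be the measurable-selection step: establishing that the graph of $M$ is measurable so that a standard selection theorem applies, and handling the $P_\xi$-null set of $\xi$ that may have vanishing components (where Theorem~\ref{th:dual_bound} is applied vacuously, and $\hat{f_\mu}$ can be set to $0$). Everything else is essentially bookkeeping once Theorem~\ref{th:dual_bound} is in hand.
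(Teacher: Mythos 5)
Your argument is correct and is essentially the route the paper takes: the corollary is stated there without proof as an ``immediate'' consequence of Theorem~\ref{th:dual_bound} applied pointwise in $\xi$, and your measurable-selection step via the closed graph of $M(\xi)$ is exactly the detail the paper glosses over. One caveat: Theorem~\ref{th:dual_bound} is derived under the assumption $c>0$ (inherited from Lemma~1 of \cite{dvurechensky2018computational}), while the corollary does not assume $\xi>0$ $P_\xi$-a.s., so the set of realizations with vanishing components need not be null as you suppose --- but this gap comes from the paper's unconditional statement of Theorem~\ref{th:dual_bound} rather than from your argument.
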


Using Corollary \ref{cor:variable_bound}, we can equivalently reformulate problem \eqref{eq:barycenter_basic} in the following manner:
\begin{equation}\label{eq:barycenter}
    \min_{r \in \Delta^n} \E\left[ \mathcal{W}(r, \xi) \right] = \min_{r \in \Delta^n} \sup_{f_\mu \in \mathcal{F}^b} \E\left[-\langle \lambda^*(f_\mu(\xi), C), r \rangle - \langle f_\mu(\xi), \xi \rangle \right],
\end{equation}
where \(\mathcal{F}^b = \{ f \colon \Delta^n \to \mathbb{R}^n \mid f\) is \(P_\xi\)-measurable, \(\Vert f \Vert_\infty \leq \Vert C \Vert_\infty\) \(P_\xi\)-a.s.\(\}\). We will use this form of the problem in the further analysis.

Notice that this reformulation is valid in the case of an existence of a positive solution to the original problem. 
If all $\xi$ are positive, then the solution to the original problem is also positive. At the same time, all $\xi$ can be made positive by adding the vector of all ones multiplied by a small number, similarly to as it was done in the analysis of the Sinkhorn algorithm for Wasserstein distances in \cite{dvurechensky2018computational}. 

\section{Algorithms}
\label{sec:algorithms}

In this section, we provide algorithms for computation of population Wasserstein barycenters using the saddle-point formulation \eqref{eq:barycenter} combined with an assumption of the existence of a positive optimal \(r^*\).

\subsection{Finite support case}
\label{sec:fin_support}

First we study a simpler situation and assume that the random variable \(\xi\) has a finite support \(\Supp(\xi) = \{c_1,\ldots, c_m\}\).
Then, any function \(f_\mu \in \mathcal{F}^b\) can be represented as a matrix  of size \(m \times n\) and the value \(f_\mu(c_i)\) of this function is defined as the \(i\)-th row of this matrix. The value of this function on points outside of \(\Supp(\xi)\) is defined to be zero. The corresponding matrix is denoted by \(M(f_\mu)\). 

To make the notation simpler, we say that we have a random variable \(\zeta\) over indices \([m]\), such that \(c_{\zeta} = \xi\). Also, we have \(f_\mu(\xi) = M(f_\mu)_{(\zeta)}\) and denote the uniform distribution over \([n]\) by \(U([n])\) and the distribution associated with \(r\) by \(P(r)\). 

Hence, if we rewrite \eqref{eq:barycenter} in terms of \(\zeta\) and use the matrix notation, we obtain the following problem
\begin{equation}\label{eq:dual_finite}
    \min_{r \in \Delta^n} \E\left[ \mathcal{W}(r, \xi) \right] = \min_{r \in \Delta^n} \max_{\substack{M \in \Mat_{m \times n}(\mathbb{R}) \\ \Vert M \Vert_\infty \leq \Vert C \Vert_\infty}} \E_\zeta \left[-\langle \lambda^*(M_{(\zeta)}, C), r \rangle - \langle M_{(\zeta)}, c_\zeta \rangle \right].
\end{equation}

This non-smooth non-strongly convex-concave saddle-point problem can be solved using the mirror descent algorithm for saddle point problems. We refer to 
\cite{nemirovski2009robust,bubeck2014convex} for additional details on this basic algorithm.

\begin{theorem}
    Algorithm \ref{alg:finite_support} outputs a pair \((\widetilde r, \widetilde M)\) that satisfies for the problem \eqref{eq:dual_finite} the duality gap criteria \eqref{eq:confidence_region_criteria} with the precision \(\varepsilon\) and the confidence level \(\sigma\) in 
     \[
        N = O\left(\frac{n \max(n\log n, m) \Vert C \Vert_\infty^2}{\varepsilon^2}  \log^2\left(\dfrac{1}{\sigma}\right)  \right)
    \]
    iterations and its total complexity is \(O(nN)\).
\end{theorem}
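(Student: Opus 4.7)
The plan is to instantiate Algorithm~\ref{alg:finite_support} as a block-wise Stochastic Mirror Descent (SMD) for the convex-concave saddle-point problem (\ref{eq:dual_finite}) and appeal to the standard convergence analysis of~\cite{nemirovski2009robust}. The key design choices are a non-Euclidean geometry on the two blocks and a doubly-stochastic gradient oracle. On the primal block $\Delta^n$ I use negative entropy as the prox-function, which is $1$-strongly convex in $\ell_1$ and has prox-diameter $\Omega_r = \log n$. On the dual block $\{M : \Vert M\Vert_\infty \leq \Vert C\Vert_\infty\}$ I use the squared Frobenius norm, $1$-strongly convex in $\Vert\cdot\Vert_F$ with prox-diameter $\Omega_M = O(mn\Vert C\Vert_\infty^2)$.

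At iteration $k$ the oracle draws $\zeta_k$ from the distribution of $\zeta$ together with an independent uniform $i_k \in [n]$. The stochastic $M$-gradient is supported on row $\zeta_k$ alone and equals $-\partial_\mu\langle\lambda^*(M_{(\zeta_k)},C),r\rangle - c_{\zeta_k}$; the subdifferential of the max-type $\lambda^*$ is a probability-like vector on $[n]$, so the Frobenius norm of the full stochastic $M$-gradient is $G_M = O(1)$. For the primal block I use the one-coordinate unbiased estimator $-n\,\lambda^*_{i_k}(M_{(\zeta_k)},C)\,e_{i_k}$, whose $\ell_\infty$-norm (the dual norm of the entropic geometry) is bounded by $2n\Vert C\Vert_\infty$ by Theorem~\ref{th:dual_bound}, giving $G_r^2 = O(n^2\Vert C\Vert_\infty^2)$. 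Plugging into the block-wise SMD expected-gap bound
\[
\mathbb{E}[\text{gap}(\widetilde r, \widetilde M)] = O\!\left(\sqrt{\tfrac{\Omega_r G_r^2 + \Omega_M G_M^2}{N}}\right) = O\!\left(\Vert C\Vert_\infty \sqrt{\tfrac{n^2\log n + mn}{N}}\right)
\]
and demanding the right-hand side to be $\leq \varepsilon$ recovers the claimed iteration count $N = O\bigl(n\max(n\log n, m)\Vert C\Vert_\infty^2/\varepsilon^2\bigr)$.

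To upgrade from expectation to the high-probability criterion (\ref{eq:confidence_region_criteria}), I would apply a martingale concentration inequality (Azuma--Hoeffding, or the vector-valued analogue suited to the entropic geometry) to the stochastic noise in the telescoping SMD identity, contributing one $\log(1/\sigma)$ factor, and then apply a standard confidence-boosting step by running $\lceil\log(1/\sigma)\rceil$ independent copies of the algorithm and returning the iterate with the smallest empirical duality-gap estimator, contributing the second $\log(1/\sigma)$ for the claimed $\log^2(1/\sigma)$ dependence. The per-iteration cost is then $O(n)$: evaluating $\lambda^*_{i_k}(M_{(\zeta_k)},C)$ is a single $n$-way max; the entropic $r$-update only touches coordinate $i_k$ and a cached normalizing constant; and the Frobenius prox step on $M$ updates only row $\zeta_k$. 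Multiplying through gives total complexity $O(nN)$. The main obstacle I anticipate is the rigorous high-probability argument, namely applying a vector-valued Bernstein/Azuma inequality in the dual of the entropic geometry without introducing extraneous dimension factors and correctly aggregating it with the amplification step — the SMD telescoping identity by itself yields only an expected-gap bound, and converting it to a tail bound in the $\ell_1/\ell_\infty$ geometry is the delicate step.
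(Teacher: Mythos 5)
Your overall architecture — block Stochastic Mirror Descent on the saddle-point problem \eqref{eq:dual_finite} with entropy prox on $\Delta^n$ (diameter $\log n$), Frobenius prox on the $\ell_\infty$-ball (diameter $O(mn\Vert C\Vert_\infty^2)$), a doubly-randomized one-coordinate primal oracle scaled by $n$ with $\ell_\infty$-bound $2n\Vert C\Vert_\infty$, and the resulting $N = O\bigl(n\max(n\log n, m)\Vert C\Vert_\infty^2/\varepsilon^2\bigr)$ — is exactly the paper's argument.

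There is, however, one genuine gap: your per-iteration cost accounting for the dual block. You take the stochastic $M$-gradient to be the \emph{full} row subgradient $-\partial_\mu\langle\lambda^*(M_{(\zeta_k)},C),r\rangle - c_{\zeta_k}$. That subgradient is $\sum_{i=1}^n r_i\,\partial_{M_{(\zeta_k)}}\max_{j}(-C_{ij}-M_{\zeta_k j}) + c_{\zeta_k}$, which requires computing the argmax $J_i$ for \emph{every} $i\in[n]$, each an $n$-way maximum — an $O(n^2)$ computation, not $O(n)$. With your oracle the total complexity would be $O(n^2 N)$, not the claimed $O(nN)$. The paper closes this by a second layer of randomization on the dual side symmetric to the one you already use on the primal side: it samples a single index $q$ from the distribution $P(r)$ induced by the current iterate $r$ and uses the unbiased single-term estimator $h = -e_{J_q} + c_t$ (see \eqref{eq:h_finite}), which costs one $n$-way max and still has Frobenius norm at most $2$. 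You need this extra sampling step to make the $O(n)$ per-iteration claim true; everything else in your bound then goes through unchanged.

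On the high-probability part you are making more work for yourself than necessary: the paper does not run an Azuma-plus-amplification argument but directly invokes the deviation bound of Proposition 3.2 in \cite{nemirovski2009robust} for saddle-point Mirror SA (applicable here since the stochastic subgradients are almost surely bounded), which yields the confidence-region criterion \eqref{eq:confidence_region_criteria} with $N \geq \tfrac{5L^2}{\varepsilon^2}\bigl(8+2\log\tfrac{2}{\sigma}\bigr)^2$ in a single application — the $\log^2(1/\sigma)$ comes from squaring the deviation parameter, not from restarting. Your proposed route could be made to work, but the selection step (estimating the duality gap of each candidate) adds complications the citation avoids.
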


 \begin{algorithm}\label{alg:finite_support}
        \KwData{\(N\) -- number of iterations;}
        \KwResult{\(\widetilde r\) -- approximation of barycenter;}
        \Begin{
            Set \(\widetilde{r} = r = \left( \nicefrac{1}{n}, \ldots, \nicefrac{1}{n} \right) \in \Delta^n\)\;
            Set \(M = 0_{m \times n} \in \Mat_{m \times n}(\mathbb{R})\)\;
            Set \(\alpha = 2\log n, \beta = 4mn\Vert C \Vert_\infty \)\;
            Set \(\eta = \frac{2}{\Vert C \Vert_\infty \sqrt{8n^2 \log n + 16mn} \cdot \sqrt{5N}}\)\;
            \For{k = 1 to N} {
                Sample \(t\) from \(P_\zeta\)\;
                Sample \(s\) from \(U([n])\)\;
                Sample \(q\) from \(P(r_{k-1})\)\;
                \(g_s :=  - n \cdot \max_{j \in [n]} (-C_{s,j} - M_{t,j})\)\;
                \(J_q = \argmax_{j \in [n]}(-C_{q,j} - M_{t,j}) \)\;
                \(h_t := -e_{J_q} + c_t,\)\;
                \(r_s := r_s\exp(- \alpha \cdot \eta \cdot g_s)\)\;
                \(r := r / (\sum_{i=1}^n r_i)\)\;
                \(M_{(t)} := M_{(t)} - \beta \cdot \eta \cdot h_t\)\;
                Project \(M\) onto \(\ell_\infty\) box\;
                \(\widetilde r := \frac{1}{k} r + \frac{k-1}{k} \widetilde r\)\;
            }
            
            Return \(\widetilde r\)\;
        }
        \caption{Mirror Descent for Finite Support case}
\end{algorithm}

\begin{remark}
    \(\widetilde r\) is an \(\varepsilon\)-approximation of the Wasserstein barycenter with probability at least \(1-\sigma\).
\end{remark} 

\begin{remark}
     If we assume that \(\Vert C \Vert_\infty^2\), \(\sigma\) are constants and \(m \geq n \log n\), Algorithm \ref{alg:finite_support} has complexity \(\widetilde{O}(n^2 m \cdot  \varepsilon^{-2})\). This is the same complexity as for the Iterative Bregman Projections (IBP) algorithm \cite{benamou2015iterative,kroshnin2019complexity}. In the case when \(m \leq n \log n\) it is also possible to obtain the complexity  \(O(n^2 m \cdot  \varepsilon^{-2})\) but under the weaker convergence criteria \eqref{eq:expectation_criteria} (see for details Remark \ref{rm:fin_sup_convergence_expectation} after the proof of the theorem).
\end{remark}

\begin{proof}
    We have a problem of type
    \[
        \min_{r \in \mathcal{X}} \max_{M \in \mathcal{Y}} \E \Psi(r,M,\zeta),
    \]
    where \(\mathcal{X} := \Delta^n, \mathcal{Y} := \{ M \in \Mat_{m \times n}(\mathbb{R}) \mid \Vert M \Vert_\infty \leq \Vert C \Vert_\infty\}\). In our case
    \[
        \Psi(r,M,\zeta) = -\langle \lambda^*(M_{(\zeta)}, C), r \rangle - \langle M_{(\zeta)}, c_\zeta \rangle,
    \]
    where \(\lambda^*\) is defined in \eqref{eq:lambda_def}.

    To use the mirror descent algorithm, we need to define prox-structures on spaces \(\mathcal{X}\) and \(\mathcal{Y}\).
    \begin{itemize}
        \item As a prox-structure on \(\mathcal{X}\) we choose the entropy setup with prox-function \(d_\mathcal{X}(r) = \sum_{i=1}^n r_i \log r_i\). We can easily bound the prox-diameter of \(\mathcal{X}\) as \(R^2_\mathcal{X} = \log n\). Then the Bregman projection is calculated by the exponential weighting in \(O(n)\) time. The corresponding norm is the \(\Vert \cdot \Vert_1\) norm.
        \item As a prox-structure on \(\mathcal{Y}\) we choose the Euclidean setup with prox-function \(d_\mathcal{Y}(M) = \frac{1}{2} \Vert M \Vert_F^2\). Clearly, we have \(R^2_\mathcal{Y} = m \cdot n \cdot 2 \Vert C \Vert_\infty^2\). The Bregman projection can be computed in linear time.  The corresponding norm is the \(\Vert \cdot \Vert_F\) norm.
    \end{itemize}
   Further, we endow the space \(\mathcal{Z} : =\mathcal{X} \times \mathcal{Y}\) with the prox-function \(d_{\mathcal{Z}}(r,M) = \frac{1}{2 \log n} d_{\mathcal{X}}(r) + \frac{1}{4 mn \Vert C \Vert_\infty^2} d_{\mathcal{Y}}(M) \).
    
    Next, let us define stochastic (sub)gradient oracles. For this purpose, we use a sample \(t \sim P_\zeta\) as the source of randomness. We have:
    \begin{align*}
        G(r, M, t) &= \partial_{r} \Psi(r, M, t) = -\lambda^*(M_{(t)}, C), \\
        H(r, M, t) &= \partial_{M_{(t)}} \left[-\Psi(r, M, t) \right] \\
        &= \sum_{i=1}^n r_i \partial_{M_{(t)}} \max_{j \in [n]} \left[- C_{ij} - M_{tj} \right] + c_t. 
    \end{align*}
   At each iteration oracle \(H\) gives us \(t\)-th row of the full subgradient matrix (up to the probability of \(c_t\)) and this way can be interpreted as a random coordinate oracle similar to random coordinate descent methods.
    
    However, these subgradients are computationally expensive and their calculation requires \(O(n^2)\) time. To make the iteration cheaper, we introduce additional randomization and apply a pure random coordinate technique to \(G\) and \(H\). More precisely, using that $r \in \Delta^n$, we sample index \(i \in [n]\) from probability distribution associated with \(r\) and calculate only \(i\)-th term of the sum in the definition of \(H\). Importantly, such stochastic subgradients remain unbiased stochastic approximations to the true subgradients.
    
    Then, we can write our final unbiased stochastic oracles using the subgradient calculus, namely subgradient of the maximum:
    \begin{align}\label{eq:g_finite}
        g(r, M, t, s) &= - n \cdot \max_{j \in [n]} (-C_{s,j} - M_{t,j})  \cdot e_s ; \\
        \label{eq:h_finite}
        h(r, M, t, q) &= -e_{J_q(M_{(t)}, t)} + c_t,
    \end{align}
    where multiplication by \(n\) in \(g\) is required to keep the unbiasedness property and \(J_q(M_{(t)}, t)\) is an index at each the value \(\lambda^*_q(M_{(t)},t)\) is achieved (see \eqref{eq:lambda_def}). These subgradients can be computed in linear in $n$ time because they require to compute maximum only over \(n\) values.
    
    For the main complexity result, we need to bound the dual norm of the stochastic subgradients 
     \begin{align*}
        \Vert g(r, M, t, s) \Vert_{\mathcal{X}^*} &= n \left\vert \max_{j \in [n]} (-C_{s,j} - M_{t,j})  \right\vert \leq 2 n \Vert C \Vert_\infty, \\
        \Vert h(r, M, t, q) \Vert_{\mathcal{Y}^*} &\leq \Vert c_t \Vert_2 + \Vert e_{J_q(M_{(t)}, c_t)} \Vert_2 \leq 2.
    \end{align*}
    
    Combining all the above, for the finite horizon of $N$ iterations and a constant step-size \(\eta = \frac{2}{L\sqrt{5N}}\), where \(L^2 = 8n^2\log n\Vert C \Vert_\infty^2 + 16mn\Vert C \Vert_\infty^2\), we apply Proposition 3.2 of \cite{nemirovski2009robust} and obtain the following complexity bound. After no more than
    \[
        N = \frac{5(8n^2\log n\Vert C \Vert_\infty^2 + 16mn\Vert C \Vert_\infty^2)}{\varepsilon^2}\left(8 + 2 \log \frac{2}{\sigma}\right)^2
    \]
    iterations Algorithm \ref{alg:finite_support} solves the saddle-point problem \eqref{eq:dual_finite} with an accuracy \(\varepsilon >0\) and a confidence level \(\sigma \in (0,1)\) in terms of \eqref{eq:confidence_region_criteria}.
    \qed
\end{proof}

\begin{remark}\label{rm:fin_sup_convergence_expectation}
    If we consider convergence of the duality gap in expectation (see \eqref{eq:expectation_criteria}), we can obtain better dependence on \(m\) for the number of iterations:
    \[
        N = O\left( \frac{n m \Vert C \Vert_\infty^2}{\varepsilon^2} \right)
    \]
    and for the total complexity: \(O\left(n^2 m \Vert C \Vert_\infty^2 \cdot \varepsilon^{-2}\right)\). In this case, the complexity bounds of our algorithm are the same as that of Iterative Bregman Projections (IBP) algorithm \cite{benamou2015iterative,kroshnin2019complexity} in all the regimes of \(m\) compared to $n$.
       
    This improvement is possible since for the analysis in terms of the expectation it is sufficient to bound the second moment of the stochastic subgradients rather than provide uniform bounds. The second moment of the dual norm of \(g\) can be bounded as \(\E\left[ \Vert g(r,M,t,s)\Vert_{\mathcal{X}^*}^2 \right] \leq 4 n \Vert C \Vert_\infty^2 =: B^2_\mathcal{X}\), whereas the uniform bound for the dual norm suffers from an additional \(\sqrt{n}\) factor. Moreover, in this setting, it is possible to use dynamic step-sizes to obtain a fully online algorithm \cite{nemirovski2009robust}.
\end{remark}

\subsection{Background on the infinite-dimensional optimization}
\label{sec:back_inf_dim_opt}
The  approach in the previous subsection can be viewed as a coordinate descent for the offline problem since when $\xi$ has finite support the expectation w.r.t. $\xi$ is actually a finite-sum. To deal with the purely online setting, we need to make additional assumptions and optimize our functional over a <<good>> subspace of \(\mathcal{F}_b\) (see \eqref{eq:barycenter} for the definition) where it can be done effectively. We follow the idea of using kernel spaces used in \cite{genevay2016stochastic} to approximate Wasserstein distance. Note that such kernel-based approach is widely used in other machine learning problems \cite{mohri2018foundations,steinwart2008support}. 

Firstly, following \cite{bogachev2020real}, we introduce a necessary background for the infinite-dimensional analysis. The main instrument for us is the Fr\`echet derivative, which is defined almost like the derivative in finite dimensions, except for the special requirements on the family of sets. For this special type of derivatives, we also have a chain rule, that we are going to actively use.

The main challenge is that this derivative is an element of the dual space that typically is not canonically isomorphic to our space. Hence, we cannot reproduce the standard proof for gradient-descent-like procedures. A possible solution is to restrict the considerations only to Hilbert spaces and apply the Riesz representation theorem to define steps in the direction of (sub)gradients. In this case, we also have a notion of subgradient $f'(x_0)$ at $x_0$ defined as
\[
    f(x) - f(x_0) \geq \langle f'(x_0), x - x_0 \rangle, \;\; x \in {\rm dom} f
\]
that is the most crucial part of the convergence proofs for gradient-descent-like procedures.

The only question is how to compute and store this infinite-dimensional object and how to compute derivatives  in \(f\) of linear functionals like \(\varepsilon_x(f) := f(x)\). For this purpose, we use an additional assumption that the space \(\mathcal{H}\) is so-called \textit{reproducing kernel Hilbert space}.

A Hilbert space \(\mathcal{H}\) of functions \(f \colon \mathcal{X} \to \mathbb{R}\) is called a reproducing kernel Hilbert space (RKHS) if there is a symmetric positive-defined function \(\mathcal{K} \colon \mathcal{X} \times \mathcal{X} \to \mathbb{R}\) called a kernel, such that
\begin{enumerate}
    \item \(\forall x \in \mathcal{X} : \mathcal{K}(\cdot, x) \in \mathcal{H}\);
    \item \(\forall f \in \Hilb : \langle f, \mathcal{K}(\cdot, x) \rangle_\Hilb = f(x)\);
    \item \(\forall x,y \in \mathcal{X} : \langle \mathcal{K}(\cdot, x), \mathcal{K}(\cdot, y) \rangle_\Hilb = \mathcal{K}(x,y)\).
\end{enumerate}

It could be proven \cite{mohri2018foundations} by an almost explicit construction that for each symmetric positive-defined function \(\mathcal{K}\) there exists such a Hilbert space. However, there are some kernels that are called \textit{universal} \cite{steinwart2008support}: they can uniformly approximate any continuous function if \(X\) is a compact subset of \(\mathbb{R}^d\). An example of such a kernel is the Gaussian RBF kernel \(\mathcal{K}(x,x') = \exp(-s \Vert x - x' \Vert_2^2)\). Due to the universal approximation property RKHS-assumption is in a sense without loss of generality.
 
\subsection{General Kernel Mirror Descent}
\label{sec:general_kmd}

In this subsection we apply the mirror descent algorithm to our partially infinite-dimensional problem using the kernel idea. We fix \((\mathcal{H},\mathcal{K})\) is a RKHS of functions from \(\Delta^n\) to \(\mathbb{R}\). Further, we search for an optimal \(f_\mu\) in the space \(\mathcal{H}^n\) of tuples of function from \(\mathcal{H}\) that forms a Hilbert space of functions from \(\Delta^n\) to \(\mathbb{R}^n\) with the inner product \(\langle f, g \rangle_{\Hilb^n} = \sum_{i=1}^n \langle f_i, g_i \rangle_\Hilb\), where \(f_i,g_i\) are \(i\)-th coordinate functions of \(f\) and \(g\) respectively.

This leads to the following new optimization problem derived from \eqref{eq:barycenter} using the RKHS assumption:
\begin{equation}\label{eq:barycenter_hilb}
    \min_{r \in \Delta^n} \E \mathcal{W}(r, \xi)= \min_{r \in \Delta^n} \sup_{f_\mu \in \mathcal{H}^{n}_b} \E\left[-\langle \lambda^*(f_\mu(\xi), C), r \rangle - \langle f_\mu(\xi), \xi \rangle \right],
\end{equation}
where \(\mathcal{H}^n_b = \{ f \in \mathcal{H}^n \mid \Vert f \Vert_\infty \leq \Vert C \Vert_\infty P_\xi\)-a.s.\(\}\) is an intersection of \(\mathcal{H}^n\) and \(\mathcal{F}^b\) (see \eqref{eq:barycenter}). 

Theoretical dependence on the kernel is hidden in the following two constants: \(R^2 = \sup_{f} \frac{1}{2} \Vert f \Vert^2_\Hilb\) and \(\kappa^2 = \sup_{x} \mathcal{K}(x,x)\), that fully depend on the chosen kernel.  Unfortunately, for most of non-trivial kernels the value of \(R^2\) is infinite because \(\Vert \cdot \Vert_\mathcal{H}\) and uniform norm \(\Vert \cdot \Vert_\infty\) are not equivalent in these infinite-dimensional spaces. Typically the value $R$ is used to bound the norm \(\Vert f \Vert_{\mathcal{H}^n} = \sqrt{ \sum_{i=1}^n \Vert f_i \Vert_{\mathcal{H}}^2 }\), where \(f_i\) is a coordinate function of \(f\). Since $R$ may be unbounded, we introduce to the problem an auxiliary constraint \(\Vert f \Vert_{\mathcal{H}^n} \leq \overline{R}\) for a sufficiently large \(\overline{R}\). In this case, if an optimal potential function lies in this ball, by Lemma 1 of \cite{antonakopoulos2019adaptive} our algorithm outputs an approximation to a solution of the original problem.
To guarantee that \(\overline{R}\) is sufficiently large, it is possible to use a doubling technique: multiply \(\overline{R}\) by two and restart the algorithm with the new value of \(\overline{R}\) until convergence. The total complexity will be asymptotically similar to the case if we choose \(\overline{R} =  \Vert f_\mu^* \Vert_{\Hilb^n}\).

The following theorem is the main result of this subsection.
\begin{theorem}
    For an arbitrary kernel \(\mathcal{K}\) and an arbitrary distribution \(P_\xi\) s.t. all $\xi>0$, Algorithm \ref{alg:general_kmd} (Kernel Mirror Descent) outputs with probability at least \(1-\sigma\)  an \(\varepsilon\)-approximation of the Wasserstein barycenter in terms of the duality gap \eqref{eq:confidence_region_criteria} in
    \[
        N = O\left(\frac{n \kappa^2 \overline{R}^2 + \Vert C \Vert^2_\infty \log n}{\varepsilon^2} \log^2\left(\frac{1}{\sigma} \right) \right)
    \] sample iterations and with
    \[
        O\left(\frac{n^3 \kappa^4 \overline{R}^4 + \Vert C \Vert_\infty^4 n \log^2 n}{\varepsilon^4} \log^4\left(\frac{1}{\sigma}\right)\right)
    \] total complexity.
\end{theorem}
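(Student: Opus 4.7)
The plan is to apply stochastic mirror descent to the saddle-point formulation \eqref{eq:barycenter_hilb} on the product space $\Delta^n \times \Hilb^n_b$, in direct analogy with the proof of the finite-support theorem, but with the Frobenius prox on the matrix part replaced by the squared-RKHS-norm prox and with the subgradient on the dual side computed through the reproducing property.

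First I would install the prox-structure. On $\mathcal{X} = \Delta^n$ use the entropic setup $d_\mathcal{X}(r) = \sum_{i=1}^n r_i \log r_i$ with diameter $R^2_\mathcal{X} = \log n$ and $\|\cdot\|_1$-conjugacy, and on $\mathcal{Y} = \Hilb^n_b$ use $d_\mathcal{Y}(f) = \tfrac12 \|f\|^2_{\Hilb^n}$ with diameter $R^2_\mathcal{Y} = R^2$ by hypothesis and the native RKHS norm. Form the composite prox $d_\mathcal{Z}(r,f) = \frac{1}{2\log n}\, d_\mathcal{X}(r) + \frac{1}{2 R^2}\, d_\mathcal{Y}(f)$, which is $1$-strongly convex in a suitably weighted product norm on $\mathcal{Z}$. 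The prox-step on $\mathcal{X}$ is exponential weighting; the prox-step on $\mathcal{Y}$ is a linear update in $\Hilb^n$ followed by enforcement of the pointwise bound.

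Next I would construct unbiased stochastic subgradients. Draw $\xi \sim P_\xi$. On the $r$-side, sampling $s \sim U([n])$ and reusing the coordinate-sampled oracle \eqref{eq:g_finite} from the finite-support proof yields $\|g\|_\infty \leq 2n\|C\|_\infty$. On the $f$-side, the reproducing property $f_i(\xi) = \langle f_i, k(\cdot,\xi)\rangle_\Hilb$ and the chain rule give
\[
\partial_f \bigl[ \langle \lambda^*(f(\xi), C), r\rangle + \langle f(\xi), \xi\rangle \bigr] \;=\; \Bigl( -\sum_{i=1}^n r_i\, e_{J_i} + \xi \Bigr) \otimes k(\cdot, \xi),
\]
where $J_i$ is the argmax in the definition of $\lambda^*_i$. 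Drawing $q \sim P(r)$ to pick one term of the sum produces the unbiased oracle $h = (-e_{J_q} + \xi)\otimes k(\cdot, \xi)$ with $\|h\|^2_{\Hilb^n} \leq 4\, k(\xi,\xi) \leq 4\kappa^2$. After rescaling by the composite prox, the effective Lipschitz constant obeys $L^2 = \Theta\bigl(n^2 \|C\|_\infty^2 \log n + \kappa^2 R^2\bigr)$.

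Then I would invoke the light-tail high-probability bound of Proposition 3.2 in \cite{nemirovski2009robust} for stochastic mirror descent on saddle-point problems with constant step size $\eta \sim 1/(L\sqrt{N})$: to reach duality gap $\varepsilon$ with probability at least $1-\sigma$ it suffices that $N = \Omega\bigl(L^2 \varepsilon^{-2} \log^2(1/\sigma)\bigr)$, which substituting the above $L^2$ yields the announced sample complexity. For the total-work bound, note that the iterate is stored as $f^{(k)} = \sum_{j \leq k} \alpha_j\, v_j \otimes k(\cdot, \xi_j)$, so evaluating $f^{(k)}(\xi_{k+1})$ costs $O(nk)$ work (the $k$ kernel evaluations are each $O(n)$ for a standard RBF model). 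The remaining per-iteration operations on the $r$-side are $O(n)$. Summing $\sum_{k=1}^N O(nk) = O(nN^2)$ and plugging in $N$ gives the stated total complexity.

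The main obstacle is the pointwise constraint $\|f\|_\infty \leq \|C\|_\infty$ inside the RKHS prox-step: projecting an arbitrary RKHS element onto this box is not closed-form. A standard workaround is to drop the hard box and instead clip only the value $f(\xi)$ used inside $\lambda^*$ and the inner product at each step; by Corollary \ref{cor:variable_bound} this does not change the supremum in \eqref{eq:barycenter_hilb}, while the resulting stochastic subgradient is still unbiased and has the same $\Hilb^n$-norm bound $2\kappa$. With this reduction in place, the remainder is the bookkeeping described above.
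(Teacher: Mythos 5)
Your overall architecture (mirror descent on the product space, entropic prox on \(\Delta^n\), squared-RKHS prox on \(\Hilb^n_b\), reproducing-property chain rule for the dual subgradient, Nemirovski et al.\ Proposition 3.2, and the \(O(nN^2)\) total-work accounting from the growing kernel expansion) matches the paper. However, your construction of the stochastic oracles deviates from Algorithm \ref{alg:general_kmd} in a way that breaks the stated bound. You reuse the coordinate-sampled primal oracle \eqref{eq:g_finite}, whose dual norm is \(2n\Vert C\Vert_\infty\), and you subsample a single term of the dual-side sum via \(q\sim P(r)\) while taking \(R^2_{\mathcal{Y}}=R^2\). This gives \(L^2=\Theta(n^2\Vert C\Vert_\infty^2\log n+\kappa^2R^2)\), which is \emph{not} the \(L^2=\Theta(\Vert C\Vert_\infty^2\log n+n^2\kappa^2R^2)\) implicit in the theorem: the factor \(n^2\) sits on the wrong term, and substituting your \(L^2\) does not yield the announced \(N\). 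The paper instead uses the \emph{full} primal subgradient \(G=-\lambda^*(f(\xi),C)\) with \(\Vert G\Vert_\infty\le 2\Vert C\Vert_\infty\) (no coordinate sampling), and the full \(n\)-coordinate dual update with \(\Vert H\Vert^2_{\Hilb^n}\le 4n\kappa^2\) together with diameter \(\mathcal{R}^2_{\mathcal{Y}}=nR^2\) (since \(R^2\) is defined per coordinate function of \(\Hilb\), not for \(\Hilb^n\)), which produces exactly \(8\log n\,\Vert C\Vert_\infty^2+8n^2\kappa^2R^2\).

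The coordinate sampling is also pointless here from a complexity standpoint: evaluating even a single \(\lambda^*_s(f(\xi),C)=\max_j(-C_{s,j}-f(\xi)_j)\) already requires the full vector \(f(\xi)\in\mathbb{R}^n\), whose computation at iteration \(k\) costs \(O(nk)\) and dominates the per-iteration work regardless; so sampling \(s\) saves nothing while inflating the oracle norm by \(n\). (In the finite-support case the situation is different, which is why the paper samples there but not here.) Your treatment of the box constraint by clipping the evaluated values \(f(\xi)\) agrees with what Algorithm \ref{alg:general_kmd} actually does and is a reasonable reading of the paper's ``projection onto \(\Hilb^n_b\)'', though the claim that this clipping leaves the mirror-descent analysis untouched deserves more justification than either you or the paper provides. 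To repair your proof, drop the \(s\)- and \(q\)-sampling, use the full subgradients with the bounds \(2\Vert C\Vert_\infty\) and \(4n\kappa^2\), and replace \(R^2_{\mathcal{Y}}=R^2\) by \(nR^2\); the rest of your argument then goes through and recovers both stated complexities.
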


\begin{remark}
    This algorithm is from the family of Stochastic Approximation (SA) algorithms and the most correct comparison can be obtained with other SA algorithms.
        In \cite{dvinskikh2020stochastic,dvinskikh2021decentralized} the following total complexity of SA approach based on entropy regularization with parameter $\gamma$ and Sinkhorn algorithm is shown:
    \begin{align*}
        \tilde O\biggl(& \frac{n^3 \Vert C \Vert^2_\infty}{\varepsilon^2} \min\biggl\{\frac{1}{\varepsilon}\sqrt{\frac{n \Vert C \Vert^2_\infty}{\gamma \mu}}, \\
        &\exp\left(\frac{\Vert C \Vert_\infty \log n}{\varepsilon} \right)\left(\frac{\Vert C \Vert_\infty \log n}{\varepsilon} + \log \left(\frac{\Vert C \Vert_\infty \log n}{\gamma \varepsilon^2} \right) \right)\biggl\} \biggl),
    \end{align*}
    where \(\mu\) is a constant of local strong convexity at the optimal point.
    In this approach it is crucial to choose \(\gamma = O(\varepsilon)\) to obtain an \(\varepsilon\)-approximation of the original non-regularized barycenter and, if we assume that all factors are constant except \(n\) and \(\varepsilon\), the complexity of our algorithm is better in \(n\) and worse in \(\varepsilon\). At the same time, 
    
    1) Choice \(\gamma = O(\varepsilon)\) leads to numerical instability of the Sinkhorn algorithm.
    
    2) \(\mu\) can be dependent on parameter \(\gamma\) and, due to the choice \(\gamma = O(\varepsilon)\), on \(\varepsilon\).
    
    3) \(\overline{R}\) can be dimension-dependent, at least in the case of the linear kernel. 
    Thus, it is not easy to compare our complexity bound and the bound of \cite{dvinskikh2020stochastic,dvinskikh2021decentralized}. But, our approach does not use entropy regularization and, thus, does not encounter the numerical instability of the Sinkhorn algorithm.
\end{remark}

    \begin{algorithm}\label{alg:general_kmd}
        \KwData{\(N\) -- number of iterations, \(\mathcal{K}\) -- kernel, \(\overline{R}^2\), \(\kappa^2\) -- constants associated with the kernel;}
        \KwResult{\(\widetilde r\) -- approximation of barycenter;}
        \Begin{
          Set \(\widetilde{r} = r = \left( \nicefrac{1}{n}, \ldots, \nicefrac{1}{n} \right) \in \Delta^n\)\;
          Set \(f_\mu = 0\)\;
          Set \(\alpha = 2\log n, \beta = 2n\overline{R^2}\)\;
          Set \(\eta = \frac{2}{\sqrt{8 \log n \Vert C \Vert^2_\infty + 8 n^2 \kappa^2 \overline{R}^2} \cdot \sqrt{5N}}\)\;
          \For{k = 1 to N} {
                Sample \(c^{(k)}\) from \(P_\xi\)\;
                \For{t = 1 to n} {
                    \((f_\mu(c^{(k)}))_t = \min\{1, \max\{-1, \sum\limits_{i=1}^{k-1} \beta^{(i)}_t \mathcal{K}(c^{(k)}, c^{(i)}) \}\}\)\;
                }
                \For {i = 1 to n} {
                    \(J_i = \arg\max_{j \in [n]} (-C_{i,j} - f_\mu(c^{(k)})_{j})\)\;
                    \(g_i = -\max_{j \in [n]} (-C_{i,j} - f_\mu(c^{(k)})_{j}) \)
                }
                \For {t = 1 to n} {
                \[
                    \beta^{(k)}_t = \eta \cdot \beta \cdot \left(-c^{(k)}_t +  \sum_{i=1}^n r^{(k)}_i I\{t = J_{i}(c^{(k)}))\} \right)
                    \]
                }
                \(r := r \cdot \exp(-\eta \cdot \alpha \cdot g)\)\;
                \(r : = r / (\sum_{i=1}^n r_i)\)\;
                \(\widetilde r := \frac{1}{k} r + \frac{k-1}{k} \widetilde r\)\;
          }
          Return \(\widetilde r\)\;
        }
        \caption{Kernel Mirror Descent (KMD)}
    \end{algorithm}

\begin{proof}
We are dealing with stochastic programming problem of the form
\[
        \min_{r \in \mathcal{X}} \max_{f_\mu \in \mathcal{Y}} \E \Phi(r,f_\mu,\xi),
\]
where \(\mathcal{X} := \Delta^n, \mathcal{Y} := \mathcal{H}^n_b\), and
\[
    \Phi(r,f,c) = -\langle \lambda^*(f(c), C), r \rangle - \langle f(c), c \rangle.
\]

For \(\mathcal{X}\) we use the entropy setup exactly as in the finite support case: there is no effect of the infinite dimension of \(f_\mu\). For \(\mathcal{Y}\) we define the prox-function \(d_{\mathcal{Y}}(f) = \frac{1}{2} \Vert f \Vert_{\mathcal{H}^n}^2 = \frac{1}{2} \sum_{i=1}^n \Vert f_i \Vert_{\mathcal{H}}^2\). The corresponding prox-diameter is \(\mathcal{R}^2_\mathcal{Y} = \overline{R}^2\). 
The corresponding Bregman divergence is
\[
    B_\mathcal{Y}(f,g) = \frac{1}{2} \Vert g \Vert_{\mathcal{H}^n}^2 - \frac{1}{2} \Vert f \Vert_{\mathcal{H}^n}^2 - \langle f, g-f\rangle_{\mathcal{H}^n} = \frac{1}{2} \Vert f-g \Vert_{\mathcal{H}^n}.
\]
Hence, the Bregman projection is the projection onto \(\mathcal{H}^n_b\). Finally, for the product space \(\mathcal{Z} := \mathcal{X} \times \mathcal{Y}\) we define \(d_\mathcal{Z}(r, f_\mu) = \frac{1}{2\log n}d_\mathcal{X}(r) + \frac{1}{2\overline{R}^2}d_\mathcal{Y}(f_\mu)\).

The last component of the analysis is the computation of the subgradients and finding bounds for their dual norm. For \(f_\mu\) we compute the subgradient coordinate-wise, i.e. for each fixed coordinate \(t\):
\begin{align*}
    G(r, f, c) &= \partial_{r} \Phi(r, f, t) = -\lambda^*(f(c), C), \\
    H(r, f, c)_t &= \left(\partial_{f}(-\Phi(r,f(c),c))\right)_t \\
    &= \left( \partial_{f}(-\Phi(r,\langle f, \mathcal{K}(\cdot,c) \rangle_\Hilb,c))\right)_t \\
    &= \mathcal{K}(\cdot, c) \cdot \left( \partial_{\mu}(\langle \lambda^*(\mu, C), r \rangle + \langle \mu, c \rangle)\right)_t \\
    &= \mathcal{K}(\cdot, c) \left( c - \sum_{i = 1}^n r_i I\{t = J_i(c)\} \right),
\end{align*}
where \(J_i(c)\) is one of the indices in \(\lambda^*_i\) where the maximum value is achieved. We can bound the norms of both subgradients as follows:
\begin{align*}
    \Vert G(r,f,c) \Vert_\infty &\leq 2 \Vert C \Vert_\infty \\
    \Vert H(r, f_\mu, c)_t \Vert_\Hilb &\leq 2 \Vert \mathcal{K}(\cdot, c) \Vert_\Hilb \leq 2 \kappa,
\end{align*}
where we used that \( \Vert \mathcal{K}(\cdot, c) \Vert_\Hilb = \sqrt {\langle \mathcal{K}(\cdot,c), \mathcal{K}(\cdot, c) \rangle_\Hilb} = \sqrt{\mathcal{K}(c,c)} \leq \kappa\). Aggregating the bound for each coordinate, we obtain
\[
    \Vert H(r,f_\mu,c) \Vert_{\Hilb^n}^2 \leq 4 n \kappa^2.
\]

Finally, for a fixed budget of $N$ iterations, we choose a constant step-size \(\eta = \frac{2}{L \sqrt{5N}}\), where \(L^2 = 8 \log n \Vert C \Vert_\infty^2 + 8 n \kappa^2 \overline{R}^2\) and, applying the result of \cite{nemirovski2009robust}, we obtain the iteration complexity
\[
    N = \frac{5(8 \log n \Vert C \Vert_\infty^2 + 8{ n \kappa^2 \overline{R}^2})}{\varepsilon^2}\left(8 + 2 \log \frac{2}{\sigma} \right)^2
\]
to find an approximate solution with accuracy $\varepsilon>0$ and confidence level $\sigma \in (0,1)$.

The last step is to calculate the complexity of (sub)gradient computations. Note that the actual value of \(f_\mu^{(k)}\) is used at each iteration $k$. For a step-size sequence \(\eta_k\) the following results gives an explicit formula for calculating \(f_\mu^{(k)}\) for the case \(f_\mu^{(0)} = 0\).
\begin{prop}
    If \(f_\mu^{(0)} = 0\), we have the following formulas to calculate \(f_\mu^{(k)}\):
    \begin{align}\label{eq:beta_calculation}
    &(f_\mu^{(k)})_t = \sum_{i=1}^{k} \beta^{(i)}_t \cdot \mathcal{K}(\cdot, c^{(i)}), 
    &\beta^{(k)}_t = \eta_k \cdot \left(-c^{(k)}_t +  \sum_{i=1}^n r^{(k)}_i I\{t = J_{i}(c^{(k)}))\} \right),
\end{align}
where \(\{c^{(k)}\}\) are samples from \(P_\xi\).
\end{prop}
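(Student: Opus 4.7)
\medskip

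The plan is to prove the proposition by induction on $k$, leveraging the explicit form of the mirror descent update in the Hilbert setting together with the computation of the subgradient $H$ already carried out in the proof of the theorem.

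The base case $k=0$ is immediate: by hypothesis $f_\mu^{(0)} = 0$, which is an empty sum, and no $\beta^{(i)}$'s have yet been defined. For the inductive step, recall that on $\mathcal{Y} = \mathcal{H}^n_b$ the prox-function is the squared Hilbert norm $d_\mathcal{Y}(f) = \tfrac{1}{2}\|f\|_{\mathcal{H}^n}^2$, whose Bregman divergence was shown in the proof to equal $\tfrac{1}{2}\|f-g\|_{\mathcal{H}^n}^2$. Consequently, the mirror descent update reduces to a standard Hilbert gradient step followed by projection onto the feasible set. Since the projection onto $\mathcal{H}^n_b$ is realized in the algorithm only at evaluation time (via the coordinate-wise clipping $\min\{1, \max\{-1,\cdot\}\}$), the coefficient update itself is simply
\[
f_\mu^{(k)} = f_\mu^{(k-1)} - \eta_k \, H(r^{(k)}, f_\mu^{(k-1)}, c^{(k)}).
\]

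Next I would plug in the explicit expression for $H$ obtained earlier in the proof. Coordinate-wise, the subgradient computation using the reproducing property $f(c) = \langle f, k(\cdot, c)\rangle_\mathcal{H}$ and the chain rule yielded
\[
H(r^{(k)}, f_\mu^{(k-1)}, c^{(k)})_t = k(\cdot, c^{(k)})\Bigl( c^{(k)}_t - \sum_{i=1}^n r^{(k)}_i\, I\{t = J_i(c^{(k)})\}\Bigr).
\]
Substituting this into the update and setting
\[
\beta^{(k)}_t := \eta_k \Bigl( -c^{(k)}_t + \sum_{i=1}^n r^{(k)}_i\, I\{t = J_i(c^{(k)})\}\Bigr),
\]
we get $(f_\mu^{(k)})_t = (f_\mu^{(k-1)})_t + \beta^{(k)}_t \, k(\cdot, c^{(k)})$. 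Combined with the inductive hypothesis $(f_\mu^{(k-1)})_t = \sum_{i=1}^{k-1}\beta^{(i)}_t\, k(\cdot, c^{(i)})$, this yields the claimed formula at step $k$.

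The only potentially delicate point is justifying that the mirror descent step reduces to the unprojected Hilbertian gradient step, i.e.\ that the clipped-at-evaluation-time representation is consistent with what the proof of convergence actually requires. This is fine because (i) the argmin defining the prox-step in a Hilbert space with Euclidean prox is exactly the projected gradient iterate, and (ii) a pointwise clipping of each coordinate function evaluation is precisely the projection of the evaluation onto the $\ell_\infty$ box defining $\mathcal{H}^n_b$. Thus the representer-style expansion of $f_\mu^{(k)}$ as a finite linear combination of $k(\cdot, c^{(i)})$'s is preserved through all iterations, and the only $k$-dependent quantities needed to compute $(f_\mu^{(k)})_t(c)$ for any $c$ are the stored coefficients $\beta^{(i)}_t$ and the kernel values $k(c, c^{(i)})$, as used in Algorithm~\ref{alg:general_kmd}.
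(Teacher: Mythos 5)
Your proof is correct and follows essentially the same route as the paper's: induction on $k$ with the inductive step given by the explicit Hilbert-space gradient update $f_\mu^{(k)} = f_\mu^{(k-1)} - \eta_k H(r^{(k)}, f_\mu^{(k-1)}, c^{(k)})$ and the computed form of $H_t$. The only difference is that you additionally flag and discuss the interaction with the projection onto $\mathcal{H}^n_b$, which the paper's two-line proof passes over silently.
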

\begin{proof}
    We proceed by induction on \(k\). The basis case \(k=0\) is clear. The induction step follows from the formula of a step of the gradient descent:
    \[
        (f_\mu^{(k)})_t = (f_\mu^{(k-1)})_t - \eta_k H(r, f_\mu^{(k-1)}, c^{(k)})_t = (f_\mu^{(k-1)})_t + \beta^{(k)}_t \mathcal{K}(\cdot, c^{(k)}).
    \]\qed
\end{proof}
    
     The proof is finished by observing that we need \(O(N)\) iterations to recalculate one coordinate function of \(f_\mu\). To calculate the subgradient w.r.t. \(r\) we need to calculate the \textit{full} vector \(f_\mu\) for a new sampled measure \(c^{(k)}\), which costs \(O(nN)\). Thus, the total complexity is \(O(nN^2)\).
    \qed
\end{proof}

We use the described kernel approach with the Gaussian RBF kernel \cite{minh2010}
\[
    \mathcal{K}(x,x') = \exp\left(-s \Vert x- x'\Vert_2^2\right),
\]
and an information-diffusion kernel for simplex \cite{lafferty2005diffusion}
\[
    \mathcal{K}(x,x') = \exp\left(-\frac{1}{t}\arccos^2\left(\langle \sqrt{x} , \sqrt{x'}\rangle\right)\right).
\]
These kernels are very similar and their main properties are also similar. Namely, for both kernels \(\kappa^2 = \sup_{x} \mathcal{K}(x,x) = 1\), and {the value of \(\overline{R}^2\) needs to be tuned}. Thus, we use it as a hyperparameter of our algorithm. Note that the values of the hyperparameters \(s\) and \(t\) need to be tuned as well.

\paragraph{Dynamic step-size.}

If we consider the convergence of the duality gap in expectation (see \eqref{eq:expectation_criteria}) instead of finding an approximate solution with large probability according to 
\eqref{eq:confidence_region_criteria}, we may use a decreasing step-size approach \cite{nemirovski2009robust} and obtain essentially the same theoretical complexity bounds. More precisely, we can use step-sizes
\[
    \eta_t = \frac{\sqrt{2}\left( \sup_{r,r'}\frac{B_{\mathcal{X}}(r,r')}{2\log n} + \sup_{f,f'}\frac{B_{Y}(f,f')}{2\overline{R}^2} \right)^{1/2}}{L\sqrt{t}} = \frac{\sqrt{3}}{L\sqrt{t}}, t \geq 0
\]
and use the sliding average \(\widetilde r = (\sum_{t=1}^N \eta_t r^{(t)})/(\sum_{t=1}^N \eta_t)\) as an output of our algorithm to obtain a fully online algorithm for an infinite horizon.

\subsection{Linear Kernel Mirror Descent}
\label{sec:linear_kmd}
In this subsection, we study a simple kernel that allows easy computation of \(f_\mu\) and all related constants. The price for this simplicity is a poor family of optimized functions: it is a linear kernel \(\mathcal{K}(x,x') = \langle x, x' \rangle\) and a family of linear operators \(\Delta^n \mapsto \mathbb{R}^n\). For this kernel we can compute both constants:
\[
    \kappa^2 = \sup_{x \in \Delta^n} \langle x,x \rangle = 1, \ \ \ R^2 = \sup_{f_\mu \in \mathcal{H}^n_b} \Vert f_\mu \Vert_{2}^2 \leq 2 n^2 \Vert C \Vert_\infty^2,
\]
where we used representation of a linear operator as a matrix of size \(n \times n\). Overall, in this setting we have the following complexity in terms of the number of gradient computations:
\[
    N = O\left( \frac{n^4 \Vert C \Vert^2_\infty}{\varepsilon^2} \log^2\left(\frac{1}{\sigma}\right)\right).
\]
Moreover, each computation of the gradient can be done on \(O(n^2)\) time since any function \(f_\mu\) can be represented by a square matrix \(\Theta\), and each gradient step can be represented in the space of matrices.
This leads to the following total complexity
\[
  N = O\left( \frac{n^6 \Vert C \Vert^2_\infty}{\varepsilon^2} \log^2\left(\frac{1}{\sigma}\right)\right).
\]
On the one hand, this theoretical complexity is quite pessimistic. On the other hand, the main advantage of our algorithm with the linear kernel is the lack of parameters to tune, and relatively cheap iterations in comparison to KMD with general kernels and methods based on gradients of the optimal transport distance \cite{dvinskikh2020stochastic}.

\begin{remark}
    All the complexity bounds in sections~\ref{sec:fin_support},~\ref{sec:general_kmd},~\ref{sec:linear_kmd} are based on bounds for the norm of the solution to the dual problem \eqref{eq:dual_problem} and for the dual norm of stochastic subgradients. These bounds are typically rough enough and we may expect that in practice significantly smaller estimates can be used, which may lead to faster convergence  due to larger step-sizes. From this point of view, it may be useful to use adaptive variants of Stochastic Mirror Descent or Stochastic Mirror Prox): \cite{stonyakin2018generalized,bach2019universal,stonyakin2021inexact} with the adaptive batching idea of \cite{dvinskikh2019adaptive,dvinskikh2020line-search}.
\end{remark}

\section{Experiments}
\label{sec:experiments}

{
In this section, we illustrate the practical performance of the proposed algorithms by numerical experiments. Our goal is to compute population barycenter with respect to \(\W_2\) distance on various datasets.
}

\subsection{Finite Support Algorithm}
\label{sec:finite_support_exp}

{
Algorithm \ref{alg:finite_support} for the finite-support setting has good theoretical properties but its practical performance depends on the choice of the step-size. To demonstrate this effect, we perform experiments on an auxiliary dataset of \(m = 10\) 1-d Gaussian measures. Each measure is discretized over the segment \([-10; 10]\) with \(n = 300\) point masses. We use a small number of measures to make computation of the objective functional feasible. 

We vary dependence on the total number of steps \(N\) in the step-size formula of Algorithm \ref{alg:finite_support} from \(N^{-0.5}\) to \(N^{-0.33}, N^{-0.2}, N^{-0.1}\) and perform computation with \(N = 5,000,000\) steps. We demonstrate the rate of convergence for all these step-size choices in Figure \ref{fig:fin_sup_conv}. Additionally, we compare shapes of the final measures in the case of a step-size \(\sim N^{-0.33}\) with the barycenter given by the solution of the corresponding linear program and with output of the Iterative Bregman Projections (IBP) algorithm with \(\gamma = 3 \cdot 10^{-4}\) and \(\gamma = 10^{-2}\) in Figure \ref{fig:fin_sup_bary}. 

In the case of larger step-size (\(\sim N^{-0,2}\) and \(\sim N^{-0.1}\)) we observe problems with scaling: there is some point with much bigger assigned mass than other. 
At the same time for IBP, smaller values of the regularization parameter lead to numerical instability of the algorithm whereas the value \(\gamma = 10^{-3}\) gives us a very similar picture to \(\gamma = 3 \cdot 10^{-4}\).  
We also can see that IBP outperforms our approach in terms of convergence rate.

\begin{figure}
    \centering
    \includegraphics[scale=0.34]{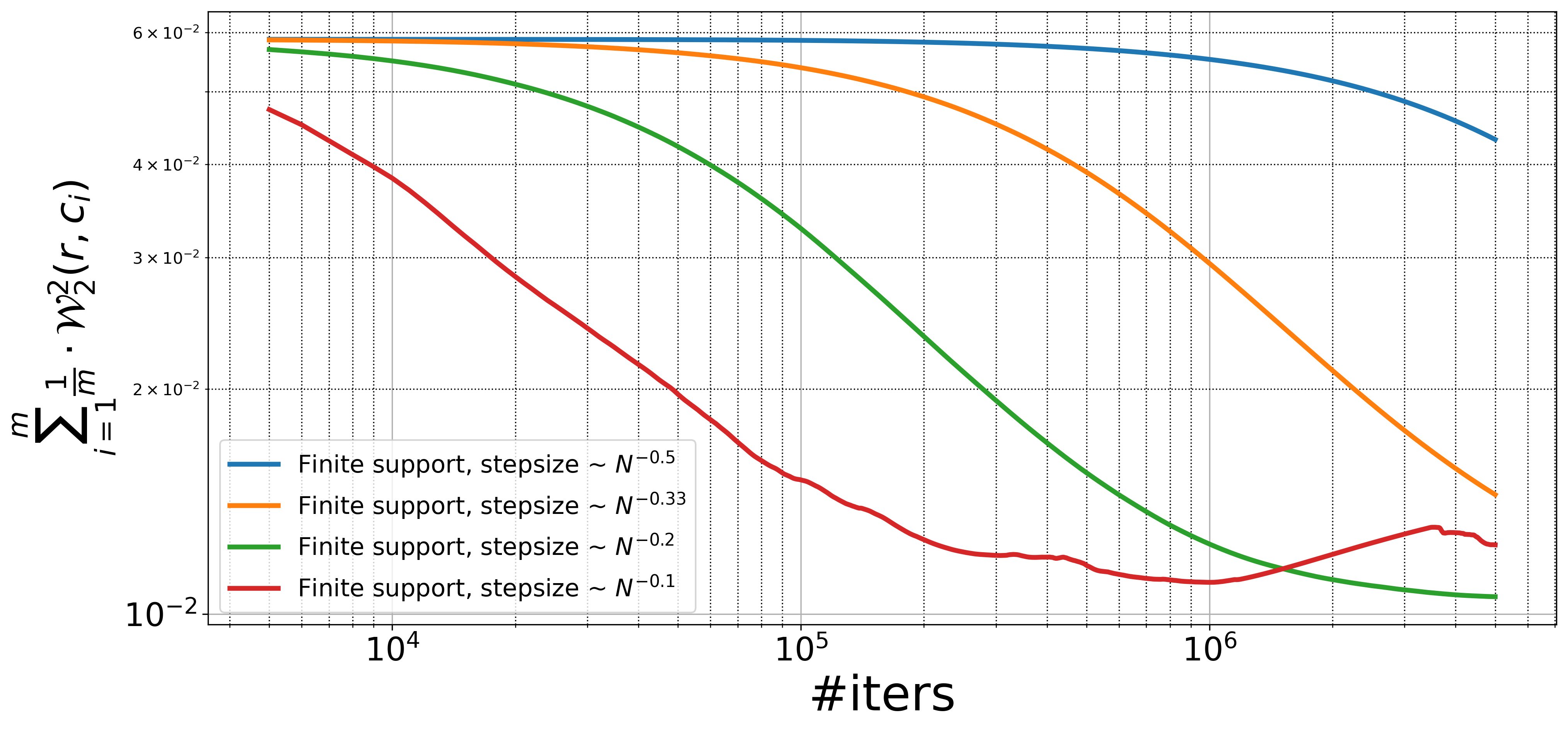}
    \caption{\(\W_2\)-barycenter functional value with various step-size parameters in Algorithm \ref{alg:finite_support}.  Presented in log-log scale.}
    \label{fig:fin_sup_conv}
\end{figure}

\begin{figure}
    \centering
    \includegraphics[scale=0.35]{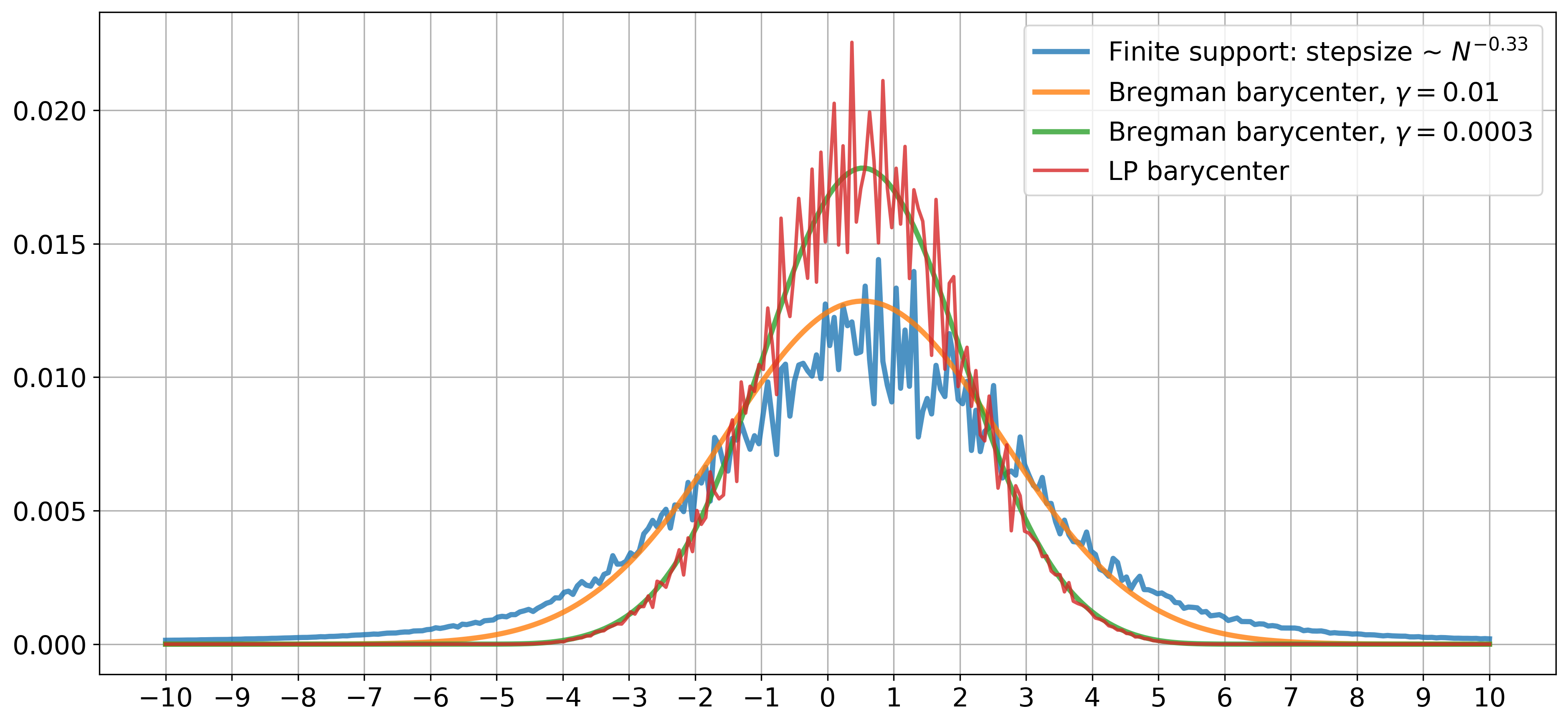}
    \caption{Visual comparison between Algorithm \ref{alg:finite_support}, IBP and solution of LP.}
    \label{fig:fin_sup_bary}
\end{figure}
}

\subsection{Kernel Mirror Descent.}
\label{sec:kmd_exp}

We study Kernel Mirror Descent Algorithm \ref{alg:general_kmd} in three settings:
\begin{itemize}
    \item Gaussian RBF kernel \(\mathcal{K}(x,x') = \exp\left( -s \Vert x - x' \Vert_2^2\right)\);
    \item Information-Diffusion kernel \(\mathcal{K}(x,x') = \exp\left( -\frac{1}{t} \arccos^2\left( \langle \sqrt{x}, \sqrt{x'} \rangle\right) \right)\);
    \item Linear kernel \(\mathcal{K}(x,x') = \langle x, x' \rangle\).
\end{itemize}

For the first two settings we choose a good value of the parameter \({\overline{R}}^2\) and the parameter of the kernel by performing a grid-search. 
For the last setting, we implement a more computationally efficient algorithm that works directly with matrices as briefly discussed in Section \ref{sec:linear_kmd}. 

We compare our approach with two algorithms, described in \cite{dvinskikh2020stochastic}:
\begin{itemize}
    \item Gradient descent with gradients of the regularized Wasserstein distance computed by \new{Sinkhorn algorithm with stabilization \cite{schmitzer2019stabilized}};
    \item Mirror descent with subgradients of non-regularized Wasserstein distance  computed by solving the dual OT problem via linear programming solver.
\end{itemize}
We refer to the first approach as Sinkhorn-based and the second one as direct LP-based.

\paragraph{1-d Gaussian measures.}
To compare convergence we consider an auxiliary dataset of randomly generated Gaussian measures. The main reason to choose this dataset is the ability to compute the true {\(\W_2\)-}barycenter of the distribution \cite{delon2020wassersteintype}. We generate 10000 1-d Gaussian measures \(\mathcal{N}(\mu,\sigma^2)\) with \(\mu\) sampled from \(\mathcal{N}(1,4)\) and \(\sigma\) sampled from \(\operatorname{Exp}(1/2)\). The true population barycenter for this distribution over Gaussian measures is \(\mathcal{N}(1,4)\), since the expected value of \(\mu\) is 1 and the expected value of \(\sigma\) is 2.

After that, for each measure, we consider its discretization over the segment \([-10;10]\) with \(n=300\) point masses. In the first step, we consider distributions over mean and variance with much more general support to generate measures that could be called outliers, and make this model closer to applications. {Our goal is to compute approximate \(\W_2\)-barycenter using these discrete measures.}

As a performance measure we choose \(\W_2\)-distance to the true barycenter. We notice that we do not have convergence guarantees in this metric for our algorithms. However, this is one of the very limited number of ways to evaluate the quality of a barycenter approximation in the online approach where the expectation can not be expressed analytically. 

{
Firstly, we found optimal parameters for our general kernel methods by the 2-dimensional search over log-scale grid. For both considered types of general kernels the optimal value was \(\overline{R}^2 = 45\), optimal \(s\) for RBF kernel is \(0.02\) and optimal \(t\) for information diffusion kernel is \(200\). We observe a tradeoff between the stability of the learning trajectory and the functional optimization, as can be seen in Figure \ref{fig:gridsearch}. The key effect is that it is sufficient to choose a relatively small value of \(\overline{R}^2\) to have good convergence properties. \new{Notice that we do not claim that our algorithm has convergence guarantees in \(\W_2\)-distance and the effect of stagnation on the level about \(5 \cdot 10^{-2}\) in Figure \ref{fig:gridsearch} could be explained by a lack of convergence in \(\W_2\)-distance.}

\begin{figure}
    \centering
    \includegraphics[scale=0.35]{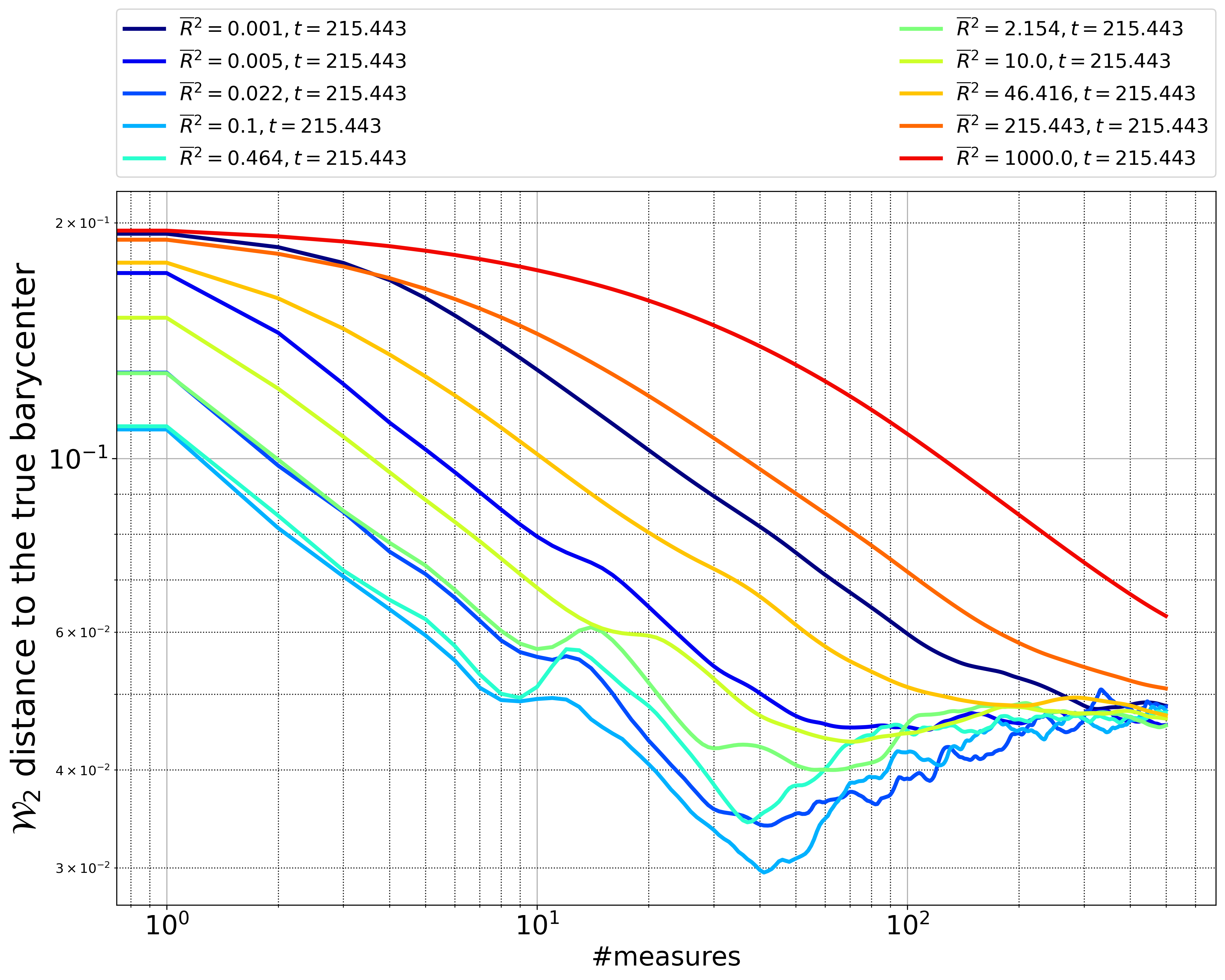}
    \caption{Change of parameter \(\overline{R}^2\) with fixed parameter of information-diffusion kernel. Presented in log-log scale.}
    \label{fig:gridsearch}
\end{figure}

Secondly, we compare the barycenter approximations obtained by our two kernel methods with the Sinkhorn-based approach with three values of the regularization parameter \(\gamma = 10^{-2}, 10^{-4}, 10^{-6}\). Since the barycenters produced by Kernel Mirror Descent with both studied kernels are visually almost the same, we present only one of them. We also perform the comparison for \(\gamma = 10^{-3}\) and \(\gamma = 10^{-5}\), but we choose previous three values as the most illustrative ones. The result of comparison can be found in Figure \ref{fig:kmd_vs_sinkhorn}.

\begin{figure}
    \centering
    \includegraphics[scale=0.35]{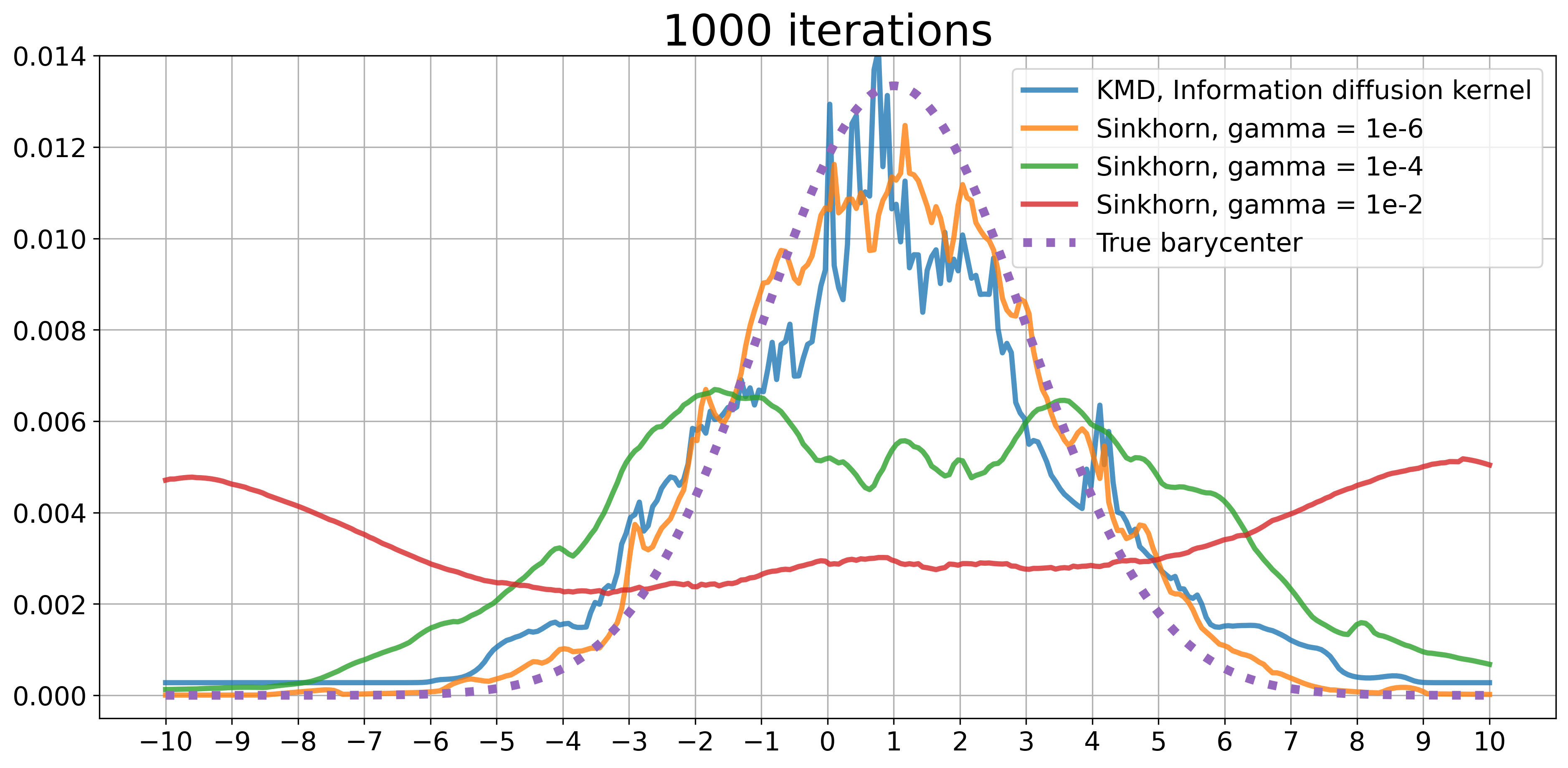}
    \includegraphics[scale=0.35]{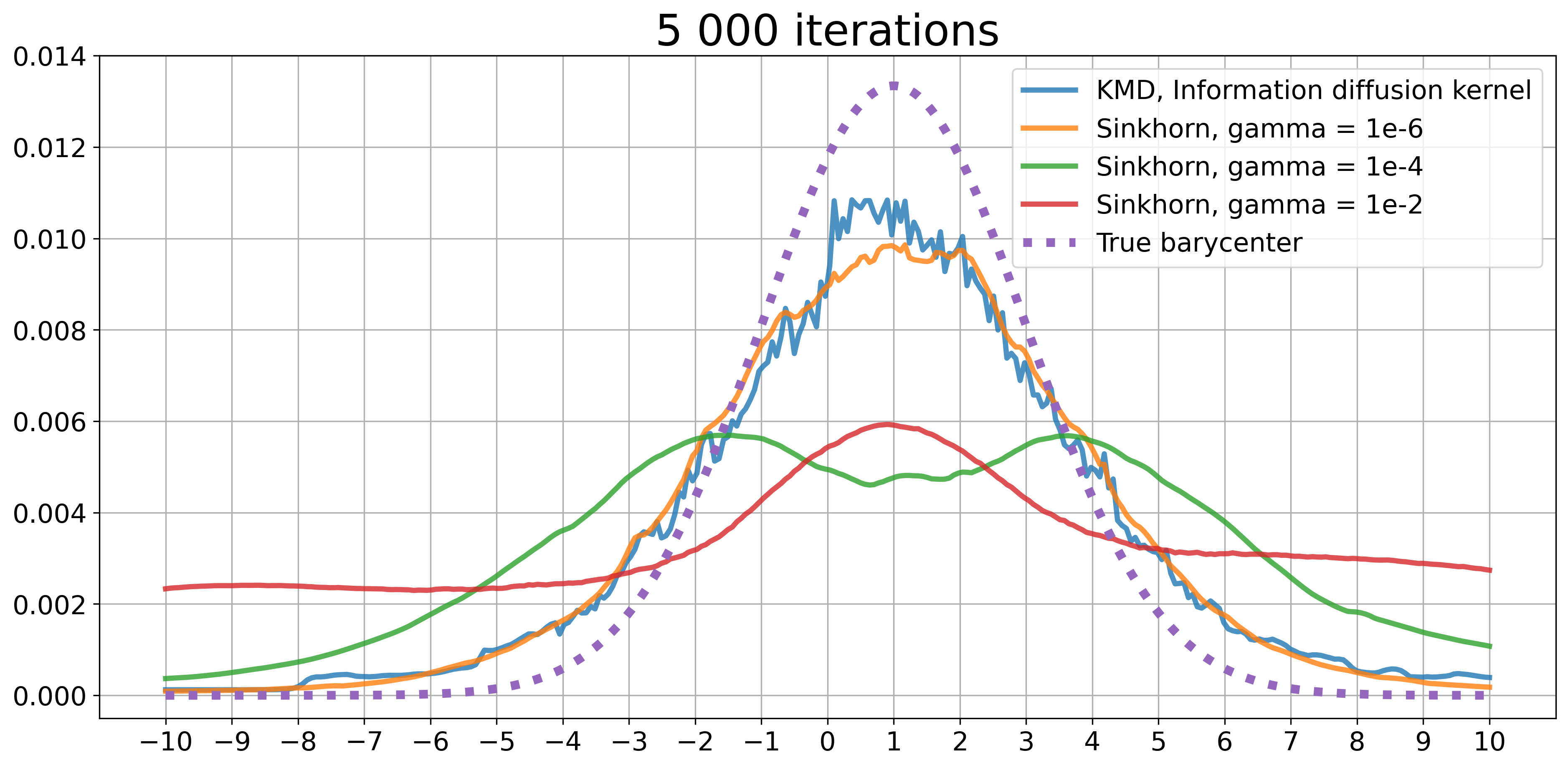}
    \includegraphics[scale=0.35]{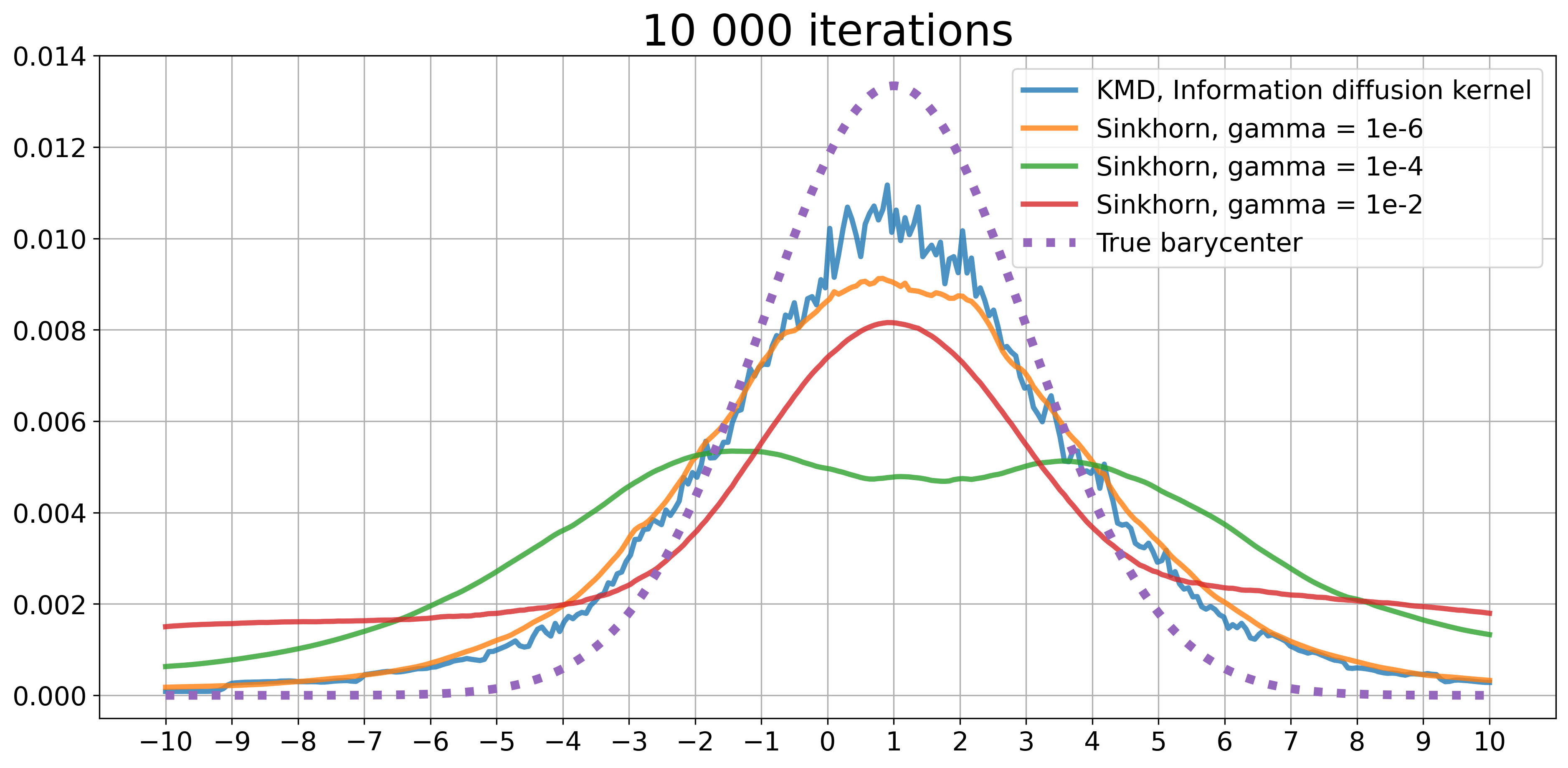}
    \caption{Comparison between the KMD and the Sinkhorn-based method.}
    \label{fig:kmd_vs_sinkhorn}
\end{figure}

For Sinkhorn-based methods, we faced numerical instabilities in the computation of the gradients that may lead to numerical errors \new{even using a stabilized algorithm}.  
This computational issue may explain the strange shape of the approximate barycenter produced by Sinkhorn-based method with \(\gamma = 10^{-4}\).
Also, notice that the gradient descent with Sinkhorn-calculated gradients converges not to the true but to the regularized barycenter. 

Next, we compare barycenter approximations computed with the linear KMD and the Sinkhorn-based method in Figure \ref{fig:lin_kmd_vs_sinkhorn}. Note that our algorithm outperforms Sinkhorn-based method with a relatively big value of \(\gamma = 0.01\). For the very small value \(\gamma = 10^{-6}\) the results are close with our approximation being a bit sharper. Moreover, our algorithm is parameter-free and we do not require to tune the regularization parameter \(\gamma\) and the number of Sinkhorn iterations to have a relatively good approximation quality.

\begin{figure}
    \centering
    \includegraphics[scale=0.35]{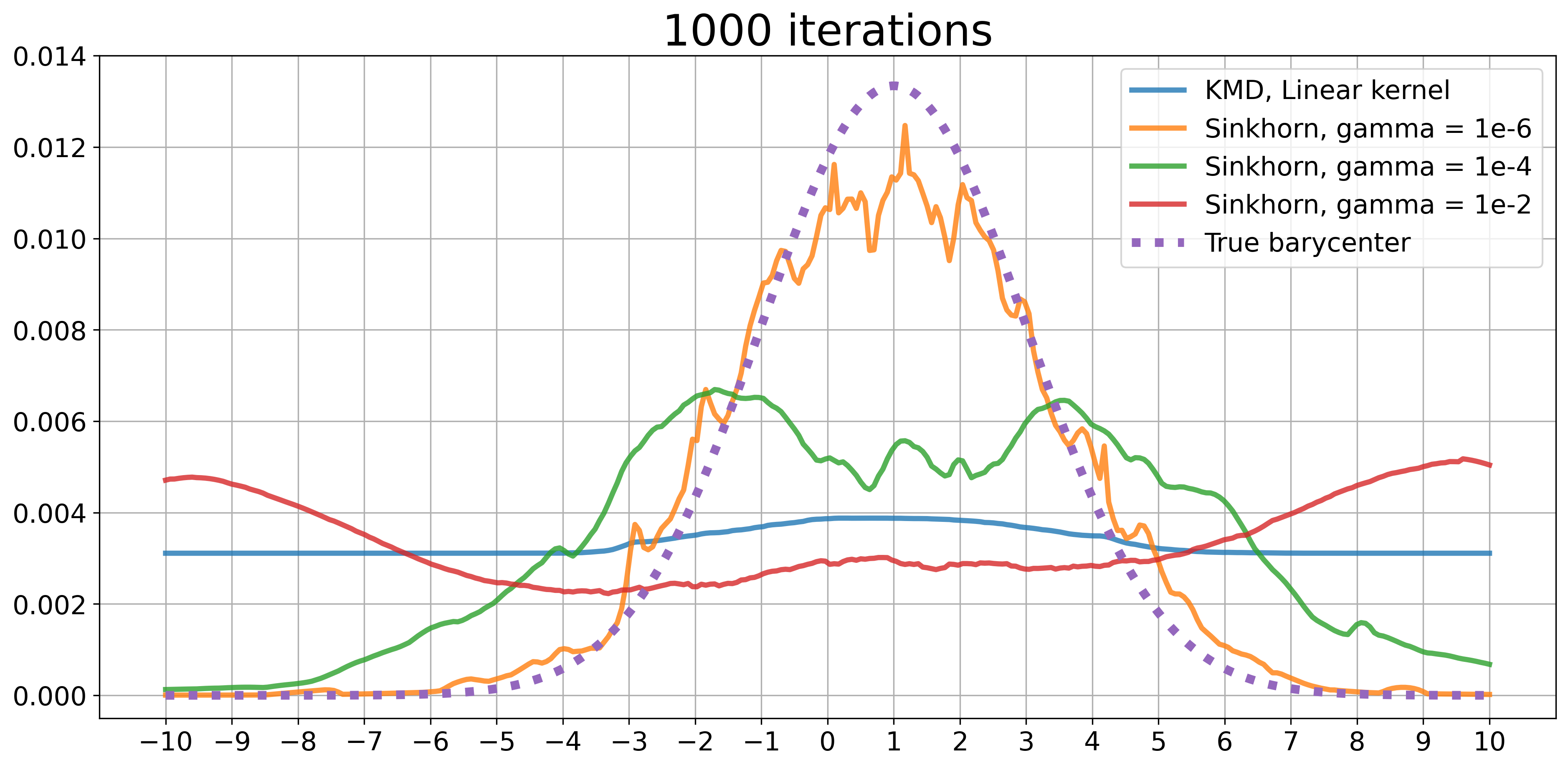}
    \includegraphics[scale=0.35]{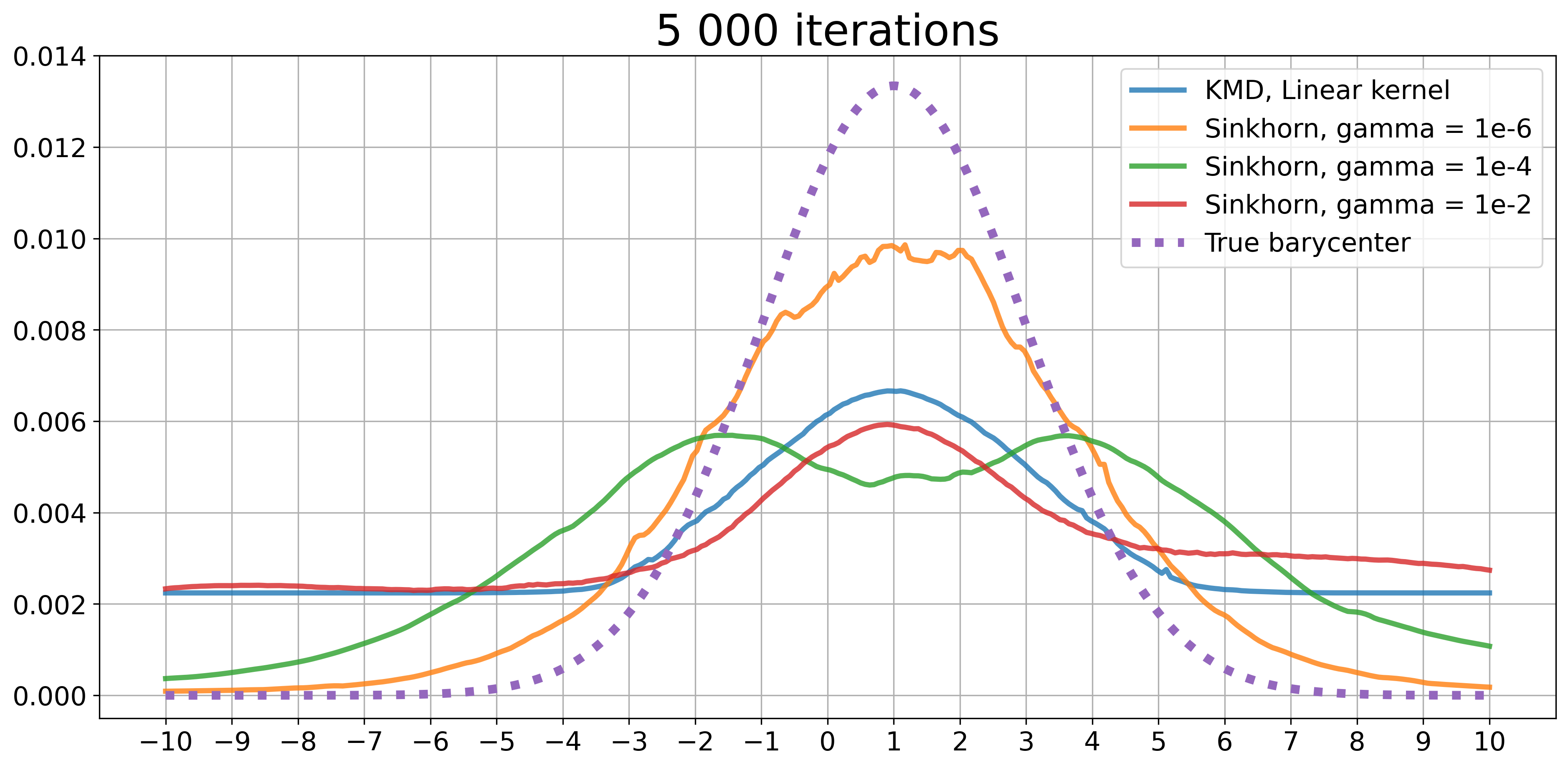}
    \includegraphics[scale=0.35]{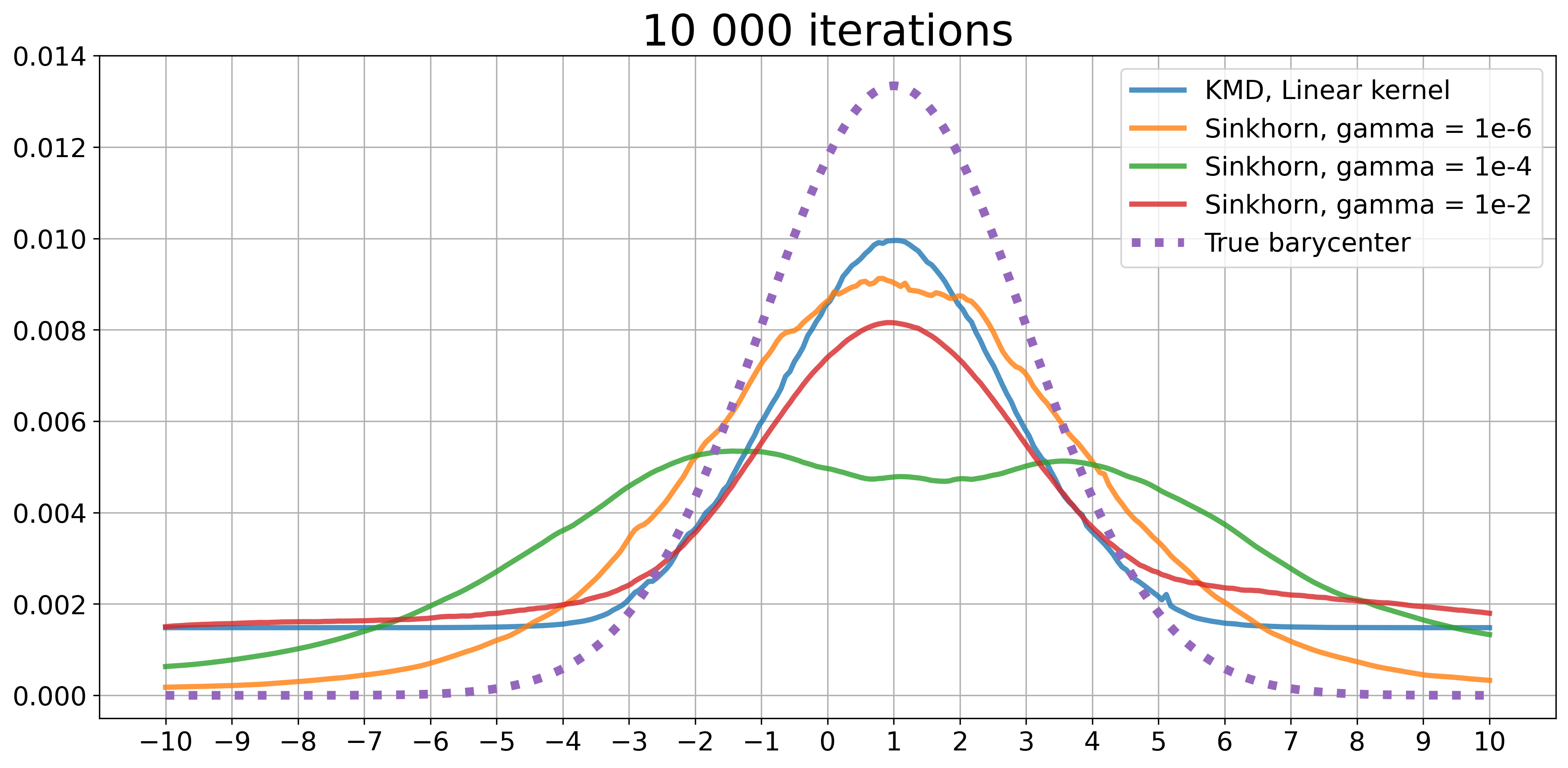}
    \caption{Comparison between the linear KMD and the Sinkhorn-based method.}
    \label{fig:lin_kmd_vs_sinkhorn}
\end{figure}

A comparison of the approximation by our method with the direct LP-based method is presented in Figure \ref{fig:kmd_vs_lp}, where we can see that \new{to obtain a good quality, the direct LP-based method requires more measures than KMD.} 

\begin{figure}
    \centering
    \includegraphics[scale=0.35]{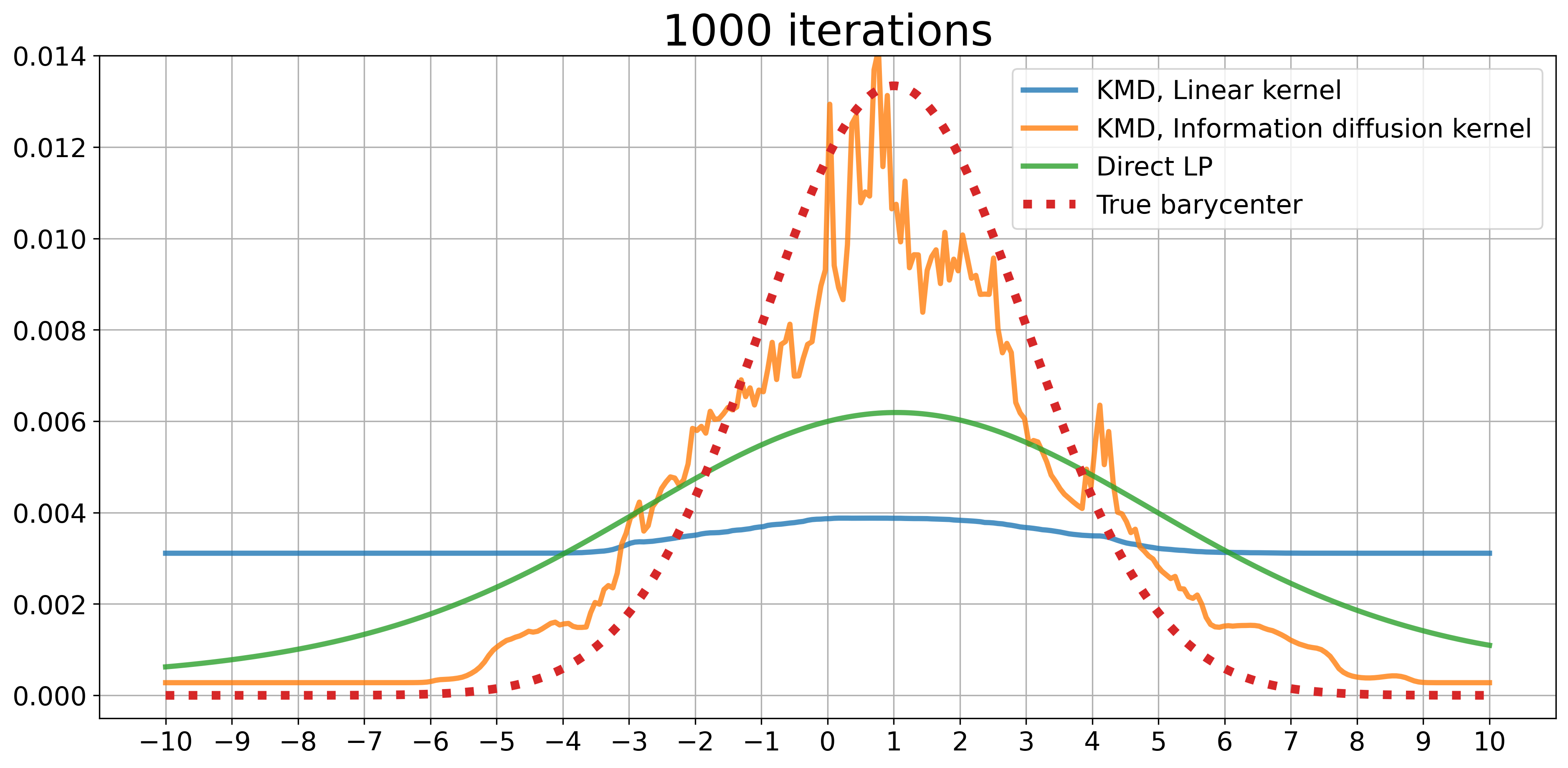}
    \includegraphics[scale=0.35]{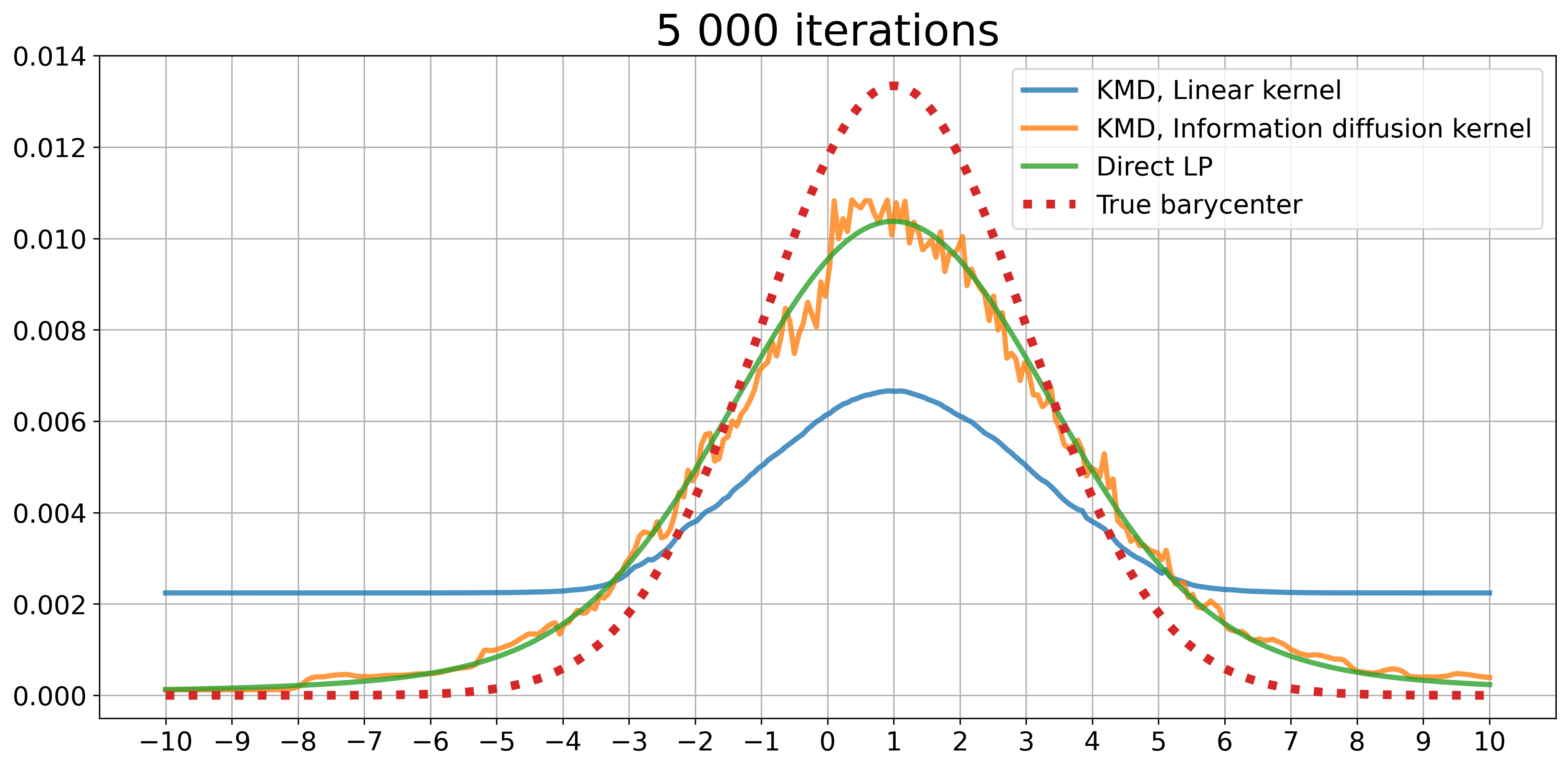}
    \includegraphics[scale=0.35]{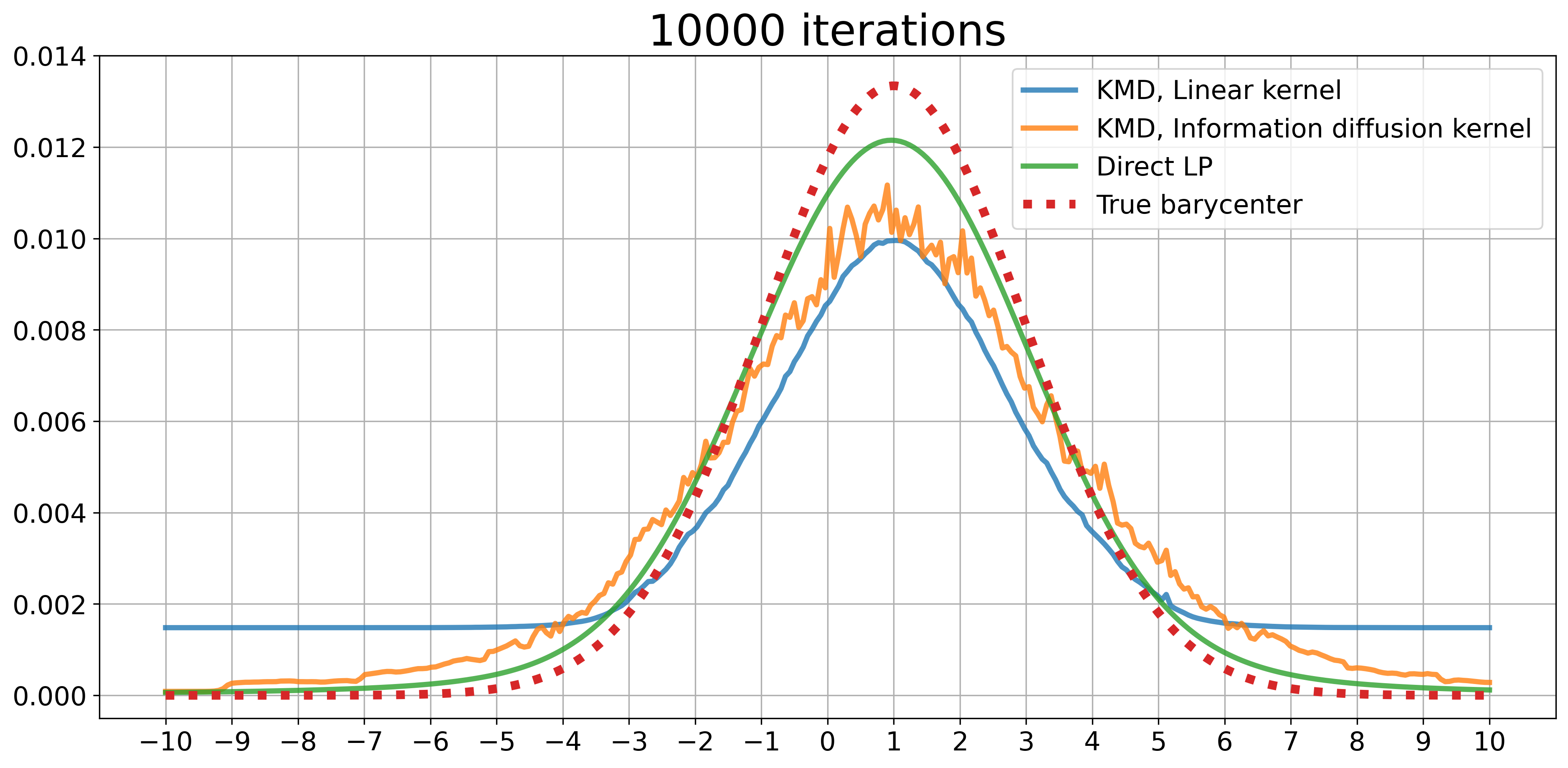}
    \caption{Comparison between the KMD and the direct LP-based method.}
    \label{fig:kmd_vs_lp}
\end{figure}

Finally, we compare all the methods using the proposed quality metric in Figure \ref{fig:w2_dist_measures}. Gaussian and information diffusion kernels show almost the same performance. The best method in \(\W_2\) distance to the true barycenter is the direct LP-based method. 

Also, we can observe that the Sinkhorn-based method with small \(\gamma =  10^{-6}\) has an increasing trend in terms of \(\W_2\)-distance to the true barycenter. This may be due to the convergence to the regularized barycenter and due to numerical instabilities in gradients caused by the small value of \(\gamma\): the computation of the gradient becomes less and less precise.

Additionally, we can see that approximate barycenters produced by the linear KMD outperforms Sinkhorn barycenters with \(\gamma = 10^{-2}\) and \(\gamma = 10^{-4}\) not only visually but also in the \(\W_2\) distance, and the barycenter approximations produced by the KMD with Gaussian kernel become better than the Sinkhorn-based method when the iteration number is large.

\begin{figure}
    \centering
    \includegraphics[scale=0.35]{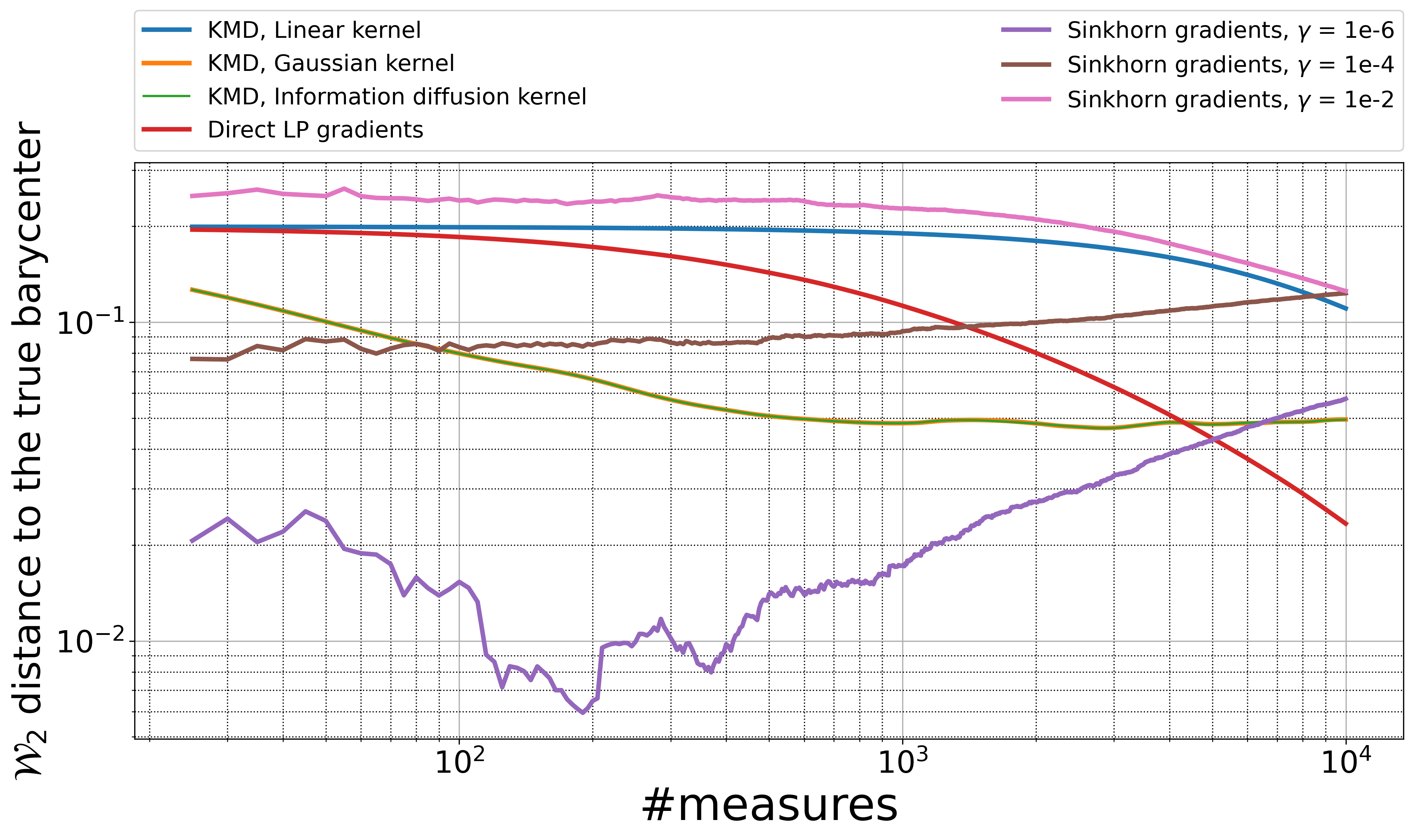}
    \caption{Quality of the estimation in \(\W_2\)-distance to the true barycenter on 1-d Gaussian dataset. Presented in log-log scale.}
    \label{fig:w2_dist_measures}
\end{figure}

We compare also the running time of each algorithm in Figure \ref{fig:processing_of_measures} except the direct LP-based method \new{and Sinkhorn-based methods with \(\gamma = 10^{-4}\) and \(\gamma = 10^{-6}\)} since they spend about \(132, 24\) and \(25\) minutes respectively whereas all the other methods converge in less than 4 minutes. We observe one expected effect: the curves corresponding to the general KMD are parabolas whereas all other curves are lines. \new{Another effect on this graph is related to the regularization parameter \(\gamma\) for the Sinkhorn-based method: smaller parameter value requires more iterations to produce a gradient.} Also we can see that the information diffusion kernel is slightly more computationally efficient than the Gaussian kernel.
}
\begin{figure}
    \centering
    \includegraphics[scale=0.35]{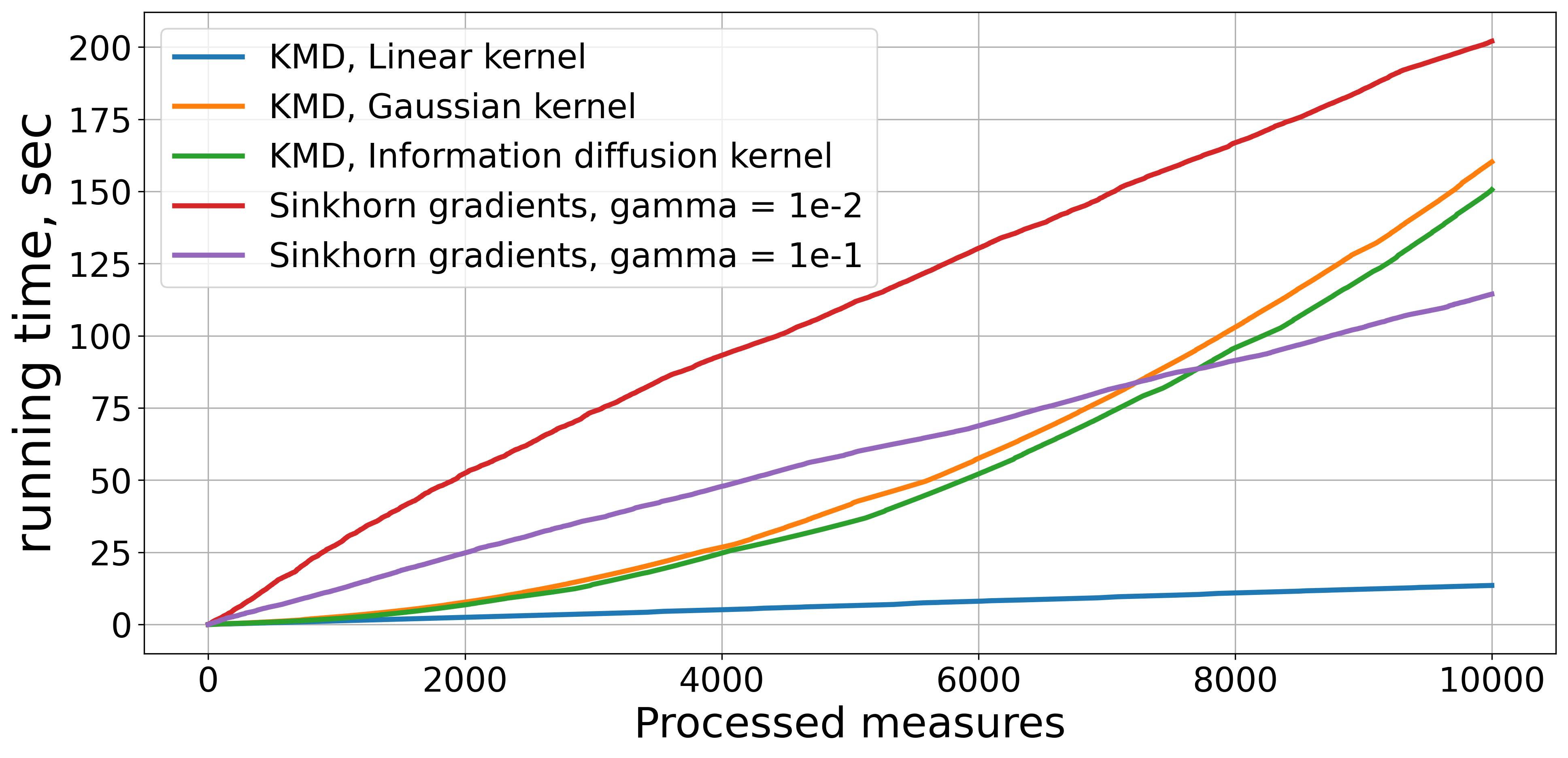}
    \caption{Speed of processing new measures on 1-d Gaussian dataset.}
    \label{fig:processing_of_measures}
\end{figure}

\paragraph{2-d Gaussian measures.}
Our next goal is to evaluate performance of our algorithms in the high-dimensional setting. Our choice is a dataset of randomly generated  2-dimensional Gaussian measures discretized over the grid \(50 \times 50\). Thus, our input histogram measures are elements of a 2500-dimensional simplex.

In this case, LP-based approach is computationally infeasible since the complexity of gradient computation by an LP solver is \(O(n^3)\). Thus, we compare the proposed approach with the Sinkhorn-based online method.

First of all, we describe the measure generation process: we sample means of 2-dimensional Gaussians from \(\mathcal{N}((-1,1)^\top, 0.75)\) and covariance matrices from Inverse-Wishart distribution with parameters \(\Phi = \begin{pmatrix}
    1 & 0.1 \\
    0.1 & 0.3
\end{pmatrix}\) and \(\nu = 2\). To compute the barycenter of these distributions, we sample \(2 \cdot 10^6\) measures and apply the fixed-point iteration procedure \cite{delon2020wassersteintype}. After that, we compute histograms of all these measures at the segment \([-5, 5]^2\) over grid \(50 \times 50\), add 1e-9 to all elements and normalize to ensure positivity. 

To compare our algorithms with the Sinkhorn-based method we used \(5000\) measures. The comparison in \(\W_2\)-distance to the true barycenter can be found in Figure \ref{fig:2d_quality}. The best result is obtained by the Sinkhorn-based algorithm with \(\gamma = 10^{-6}\). Also, notice that our approaches with general kernels perform well whereas linear KMD almost failed. 

\new{Running time comparison is presented in Table \ref{tab:2d_speed}. Notice that values for the smallest regularization parameters almost coincide because algorithms reached a maximum number of Sinkhorn iterations. In this table, we see that KMD performs much faster than Sinkhorn-based method whereas it has a relatively good quality that was outperformed only by using very small \(\gamma = 10^{-6}\) with almost \(50\)-time worse running time.}

\begin{figure}
    \centering
    \includegraphics[scale=0.35]{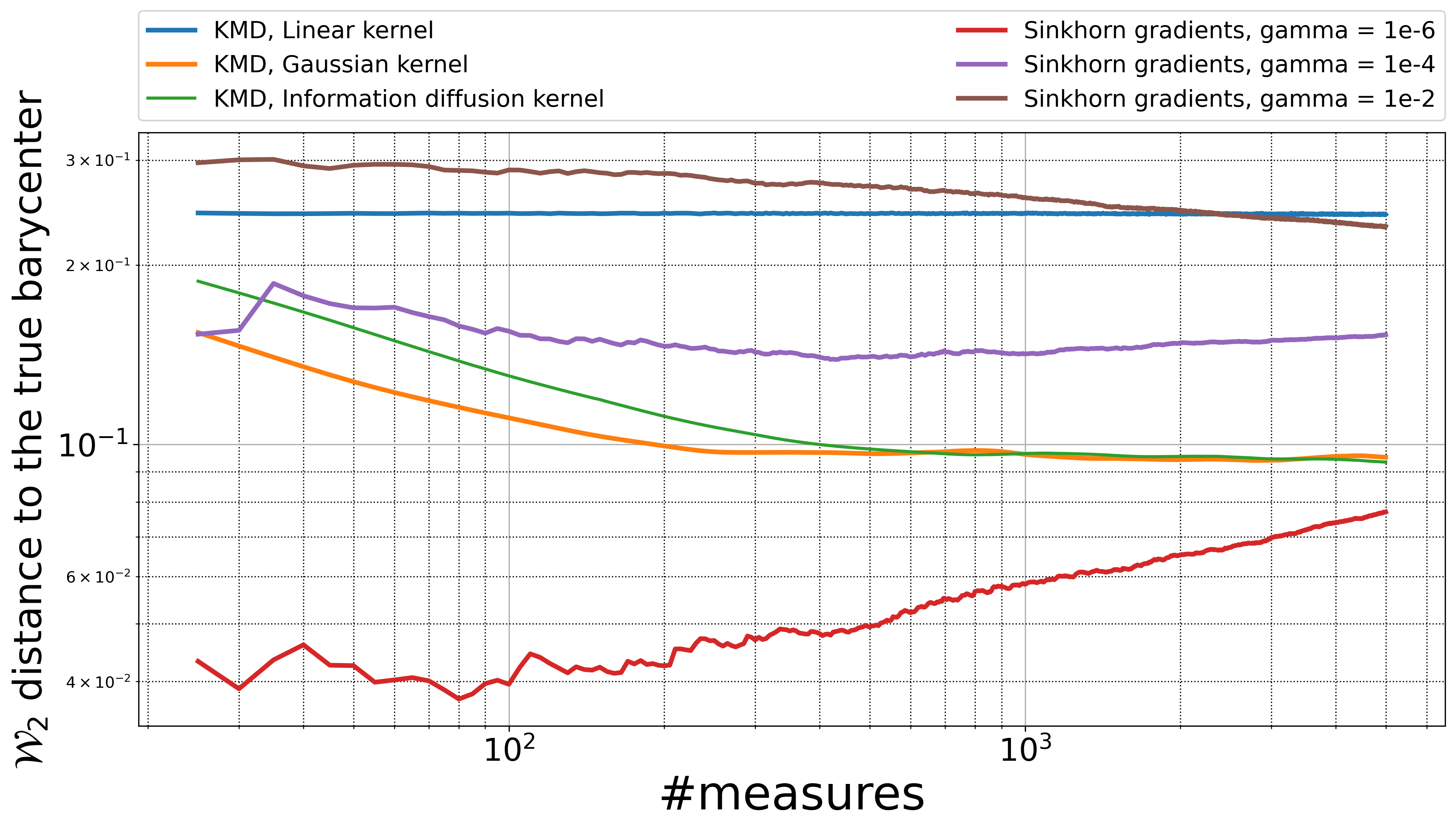}
    \caption{Quality of the estimation in \(\W_2\)-distance to the true barycenter on 2-d Gaussian dataset. Presented in log-log scale.}
    \label{fig:2d_quality}
\end{figure}

\begin{table}
    \centering
    \begin{tabular}{|c|c|c|c|c|c|c|c|c|c|}
        \hline 
        Sinkhorn, \(\gamma=\) & \( 10^{-6}\) & \(10^{-5}\) &  \(10^{-4}\) &  \(10^{-3}\) &  \(10^{-2}\) &  \(10^{-1}\)\\ 
        Time, min. & 521.85 & 491.88 & 502.63 & 433.81 & 354.13 & 30.01 \\ \hline\hline
        KMD & \multicolumn{2}{c|}{Information-diffusion kernel} & \multicolumn{2}{c|}{RBF kernel} & \multicolumn{2}{c|}{Linear kernel} \\
        Time, min. & \multicolumn{2}{c|}{9.42} & \multicolumn{2}{c|}{10.32} & \multicolumn{2}{c|}{5.37}  \\
        \hline
    \end{tabular}
    \caption{Running time comparison for 2-d Gaussian dataset.}
    \label{tab:2d_speed}
\end{table}

\paragraph{MNIST dataset}
{ 
First of all, a real-world dataset gives us an opportunity to demonstrate the performance on real images. We use 2500 images of hand-written digits <<3>> from the MNIST dataset\footnote{\url{http://yann.lecun.com/exdb/mnist/}} and our goal is to compute their \(\W_2\)-barycenter in the online setting. The barycenter approximations generated by the Kernel Mirror Descent (Algorithm \ref{alg:general_kmd}) with an information-diffusion kernel with parameters \(\overline{R}^2 = 10, t = 10\) during first 100 iterations can be seen in Figure \ref{fig:mnist_learning}. Notice that the Sinkhorn-based method does not provide any meaningful pictures during the first 100 iterations.

\begin{figure}
    \centering
    \includegraphics[scale=0.28]{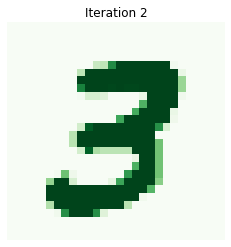}
    \includegraphics[scale=0.28]{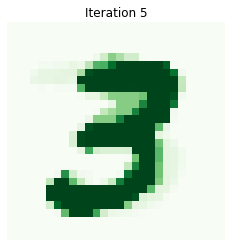}
    \includegraphics[scale=0.28]{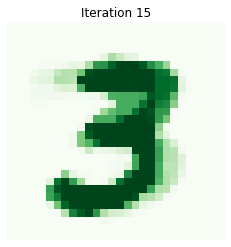}
    \includegraphics[scale=0.28]{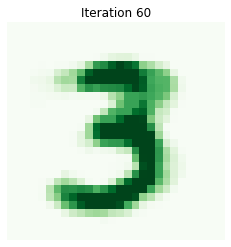}
    \includegraphics[scale=0.28]{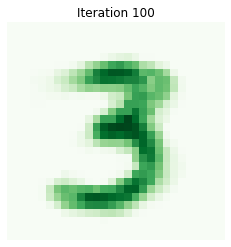}
    \caption{Learning of the barycenter of MNIST dataset.}
    \label{fig:mnist_learning}
\end{figure}

Our next goal is to study the influence of the kernel parameters and the parameter \(\overline{R}^2\) for the Kernel Mirror Descent algorithm with the information diffusion kernel. For this experiment, we run a 2-d grid-search over the parameters \(\overline{R}^2\) and \(t\). The most interesting results are presented in Figure \ref{fig:mnist_different_r_t}. 

For the values of \(t\) less than \(0.004\) the image disappears. Thus, we may say that the value of \(t \geq 1\) can be considered as a  <<safe>> threshold value for obtaining a good barycenter. However, there is a possibility to vary the <<thickness>> of the barycenter by tuning this parameter.

If we fix \(t\) and start to vary the parameter \(\overline{R}^2\), we can see that this parameter affects the <<sharpness>> of the barycenter. Additionally, the choice of  \(\overline{R}^2\) can be considered as a new type of regularization.

\begin{figure}
    \centering
    \includegraphics[scale=0.35]{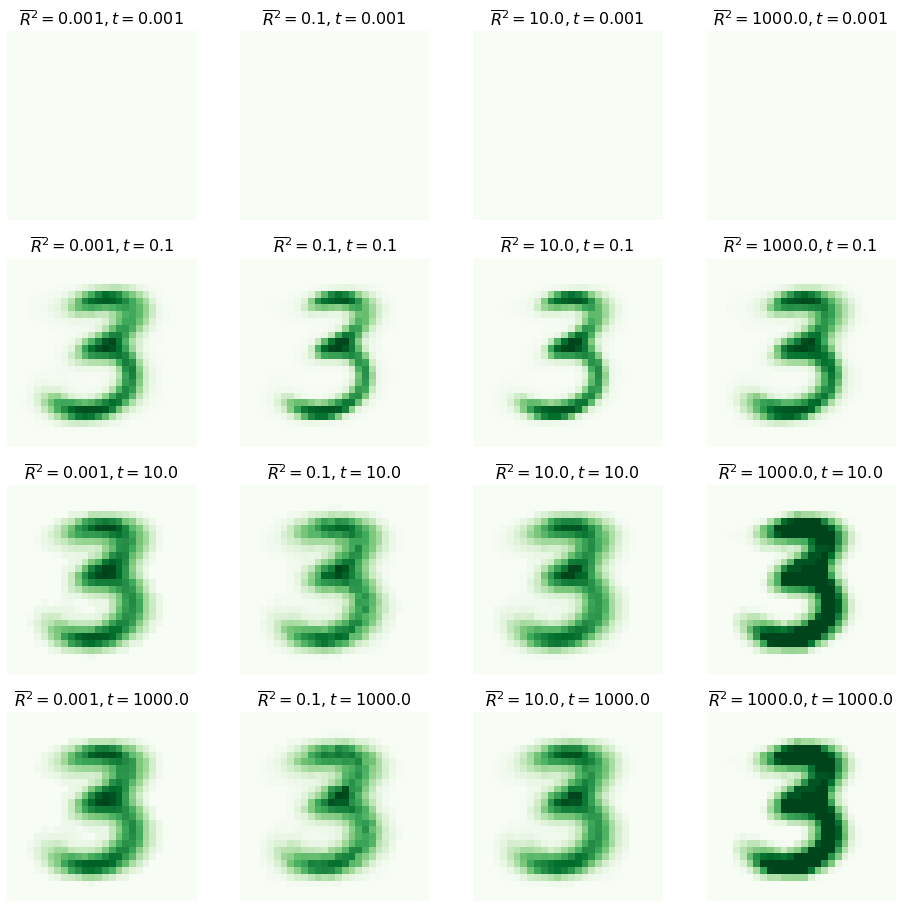}
    \caption{Comparison of the barycenter computed by the KMD with an information-diffusion kernel with different values of \(\overline{R}\) and \(t\).}
    \label{fig:mnist_different_r_t}
\end{figure}

}
\section{Conclusion}
\label{sec:conclusion}

In this work, we consider two algorithms for the stochastic approximation approach to the population Wasserstein barycenter problem in the discrete setting. This barycenter minimizes the expectation of the Wasserstein distance to random probability measure generated from some distribution.

The first algorithm uses an additional assumption that the expectation is taken over a finite number of measures. This assumption is very similar to the standard offline setting with the SAA-approach when we replace the expectation with the finite-sum. This algorithm has similar theoretical complexity to that of the well-known Iterative Bregman Projections algorithm. 

The second algorithm is proposed for a general setting without such restrictive assumptions and uses optimization in reproducing kernel Hilbert spaces as the main instrument. It is the first algorithm for the online Wasserstein barycenter problem that does not use typically expensive or numerically unstable computation of (sub)gradients of the Wasserstein distance or its regularized version. Moreover, in the experiment section, we show that our approach is practical and comparable to both main existing online approaches. Also, the use of the simplest possible linear kernel gives us a practical parameter-free algorithm.

Turning to possible extensions, a promising direction for future work is an application of this saddle-point framework to the entropy-regularized problem. Regularization allows one to use accelerated gradient methods that are known to be efficient for solving the regularized optimal transport problem \cite{kroshnin2019complexity,lin2019efficient}. Another reasonable direction is extension of this approach to the continuous measure spaces, e.g. the space of Gaussian measures.

\bibliographystyle{spmpsci}      
\bibliography{references.bib}   

%
%

\end{document}